\newtheorem{theorem}{Theorem}
\newtheorem{lemma}[theorem]{Lemma}
\newtheorem{definition}[theorem]{Definition}
\newtheorem{proposition}[theorem]{Proposition}
\newtheorem{corollary}[theorem]{Corollary}
\newcommand{\CC}{\mathbb{C}}
\newcommand{\FF}{\mathbb{F}}
\newcommand{\NN}{\mathbb{N}}
\newcommand{\QQ}{\mathbb{Q}}
\newcommand{\TT}{\mathbb{T}}
\newcommand{\ZZ}{\mathbb{Z}}
\newcommand{\Qbar}{\overline{\QQ}}
\newcommand{\Zbar}{\overline{\ZZ}}
\newcommand{\Fbar}{\overline{\FF}}
\newcommand{\Tbar}{\overline{\TT}}
\newcommand{\cO}{\mathcal{O}}
\newcommand{\fa}{\mathfrak{a}}
\newcommand{\fm}{\mathfrak{m}}
\newcommand{\fn}{\mathfrak{n}}
\DeclareMathOperator{\Gal}{Gal}
\DeclareMathOperator{\Frob}{Frob}
\DeclareMathOperator{\Tr}{Tr}
\DeclareMathOperator{\Spec}{Spec}
\newcommand{\GL}{\mathrm{GL}}
\newcommand{\SL}{\mathrm{SL}}
\newcommand{\rhobar}{{\overline{\rho}}}
\newcommand{\Hom}{{\rm Hom}}
\newcommand{\End}{{\rm End}}
\newcommand{\fmat}[4]{\begin{pmatrix} #1 & #2 \\ #3 & #4 \end{pmatrix}}           
\newcommand{\fvect}[2]{\begin{pmatrix} #1 \\ #2 \end{pmatrix}}                    
\newcommand{\Zmod}[1]{\overline{\ZZ/{#1}\ZZ}}
\newcommand{\pf}{\textrm{pf}}
\newcommand{\la}{\lambda}
\newcommand{\Sh}{\mathrm{Sh}}
\newcommand{\isom}{\cong}
\newcommand{\Supp}{\operatorname{Supp}}
\newcommand{\al}{\alpha}
\newcommand{\ram}{\textrm{ram}}
\newcommand{\unram}{\textrm{unr}}
\begin{document}

\title{Topics on modular Galois representations modulo prime powers}
\author{Panagiotis Tsaknias and Gabor Wiese\footnote{Universit\'e du Luxembourg,
Unit\'e de Recherche en Math\'ematiques,
Maison du nombre,
6, avenue de la Fonte,
L-4364 Esch-sur-Alzette, Luxembourg; p.tsaknias@gmail.com, gabor.wiese@uni.lu }}

\maketitle

\abstract{
This article surveys modularity, level raising and level lowering questions for two-dimensional representations modulo prime powers
of the absolute Galois group of the rational numbers.
It contributes some new results and describes algorithms and a database of modular forms orbits and higher congruences.\\
MS Classification (2010): 11F33, 11F80
}

\section{Introduction}\label{sec:intro}

The Fontaine-Mazur conjecture relates $\ell$-adic `geometric' Galois representations with objects from geometry.
In the $2$-dimensional case over~$\QQ$ much progress has been achieved (\cite[Thm.1.2.4(2)]{EmertonLG} and \cite{KisinFM}):
\begin{theorem}[Emerton, Kisin]
Let $\ell > 2$, let $E/\QQ_\ell$ be a finite extension and let $\rho: \Gal(\Qbar/\QQ) \to \GL_2(E)$ be an irreducible, finitely ramified,
odd Galois representation which is de Rham at $\ell$ with distinct Hodge-Tate weights.
Assume that the residual representation $\rhobar$ satisfies certain local conditions.

Then a twist of $\rho$ is attached to some newform.
\end{theorem}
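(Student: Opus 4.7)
The overall strategy is to reduce the theorem to a modularity lifting theorem applied to the residual representation $\rhobar$. The plan breaks naturally into three stages: residual modularity, a Galois-deformation-ring/Hecke-algebra comparison, and a descent from the universal deformation to the given $\rho$.

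\emph{Stage 1: Residual modularity.}
First I would invoke Serre's modularity conjecture, proved by Khare and Wintenberger, to conclude that the residual representation $\rhobar: \Gal(\Qbar/\QQ) \to \GL_2(\Fbar_\ell)$ is modular (it is odd and, by the local hypothesis on $\rhobar$, in particular irreducible after a mild twist, or can be handled via the reducible case separately). A twist of $\rhobar$ therefore arises from some newform $f_0$ of some weight and level. This reduces the problem to showing that the specific characteristic-zero lift $\rho$ of (a twist of) $\rhobar$ is itself modular.

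\emph{Stage 2: Setting up $R = \TT$.}
Next I would set up a global Galois deformation ring $R$ parametrising lifts of $\rhobar$ with the prescribed local conditions: de Rham at $\ell$ with the given Hodge--Tate weights (enforced either via Fontaine--Laffaille theory or Kisin's potentially semistable deformation rings), together with suitable minimal or tame local conditions at the finite places of ramification. On the automorphic side I would build a Hecke algebra $\TT$ acting on the $\fm_\rhobar$-localised space of modular forms of matching weight and level, carefully chosen so that every classical point of $\TT$ produces a Galois representation satisfying the same local conditions. The core of the proof is the identification $R = \TT$, carried out via the Taylor--Wiles--Kisin patching method: choose auxiliary sets of Taylor--Wiles primes to cut down the tangent space of $R$, patch local-at-$\ell$ deformation rings together with a cofinal system of Hecke modules, and appeal to a commutative-algebra criterion (the Taylor--Wiles--Diamond numerical criterion, or Kisin's variant) to conclude that the patched ring acts faithfully on the patched module.

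\emph{Stage 3: Descending to $\rho$.}
Once $R = \TT$, the representation $\rho$ (or a twist, to pin down determinant conditions) corresponds to an $E$-valued point of $\Spec R[1/\ell] = \Spec \TT[1/\ell]$, hence to a Hecke eigensystem on a space of modular forms; by classicality results (e.g. Coleman's small-slope criterion, or Kisin's analysis of crystalline points) this eigensystem comes from a classical newform, which is the required modular form.

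\emph{Main obstacle.}
The principal difficulty is ensuring that the local deformation ring at $\ell$ is sufficiently well-behaved: one needs that the lifts with the prescribed Hodge--Tate weights and residual local type land in a connected component of the potentially semistable deformation ring whose generic fibre is known to be smooth of the expected dimension. This is exactly what the assumption on local conditions at $\ell$ for $\rhobar$ is designed to guarantee, and it is here that Emerton's and Kisin's arguments must deploy their heaviest machinery --- Kisin via moduli of finite flat group schemes and explicit resolutions of flat deformation rings, Emerton via a local-global compatibility theorem in the $p$-adic Langlands correspondence for $\GL_2(\QQ_\ell)$. Handling residually reducible $\rhobar$ and the case of very small residual image (where Taylor--Wiles primes fail to exist in sufficient supply) requires separate arguments, typically via $\ell$-adic families and a patching-free approach in the Emerton treatment.
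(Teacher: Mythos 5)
This theorem is not proved in the paper at all: it is quoted as background, with citations to Emerton's local--global compatibility paper \cite{EmertonLG} (Theorem 1.2.4(2)) and Kisin's work on the Fontaine--Mazur conjecture \cite{KisinFM}. There is therefore no ``paper's own proof'' to compare against. Your sketch is a reasonable bird's-eye description of the modularity-lifting machinery behind those references --- residual modularity via Khare--Wintenberger, an $R=\TT$ identification via Taylor--Wiles--Kisin patching, and descent to the given $\rho$ via classicality --- but it is a strategy outline, not a proof, and none of the hard points (Kisin's flat deformation rings and their connected components, Emerton's $p$-adic local Langlands input, the treatment of small or reducible residual image, and the precise ``certain local conditions'' the theorem leaves unspecified) is actually carried out. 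Had the paper set out to prove the theorem, those are exactly the places the argument would live; as it stands, the paper simply cites the result, and the correct response to this exercise is to recognize that and point to the references rather than attempt to reconstruct a several-hundred-page proof in a paragraph.
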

In fact, the level and the weight of the newform can be read off from~$\rho$.

The picture for mod $\ell$ representations is even more complete:
Serre type modularity conjectures relate $2$-dimensional Galois representations with $\Fbar_\ell$-coefficients with modular forms over~$\Fbar_\ell$.
Serre's original modularity conjecture has been established by Khare and Wintenberger~\cite{KW}:
\begin{theorem}[Khare, Wintenberger, Kisin]
Let $\rhobar: \Gal(\Qbar/\QQ) \to \GL_2(\Fbar_\ell)$ be an odd irreducible Galois representation.
Then $\rhobar$ is attached to (the reduction of) some newform.
\end{theorem}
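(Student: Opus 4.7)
The plan is to follow the strategy of Khare--Wintenberger, which combines potential modularity, modularity lifting theorems (of $R=T$ type in the style of Wiles, Taylor--Wiles, Diamond, Kisin), and a delicate induction that migrates the problem between residue characteristics via compatible systems. At the outset I would try to reduce to the case where $\rhobar$ is absolutely irreducible and totally odd (given); the reducible case is actually absent from the statement but the dihedral/exceptional-image cases need separate treatment using CM forms or explicit arguments.

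First I would invoke a \emph{lifting theorem}: produce a geometric $\ell$-adic lift $\rho : \Gal(\Qbar/\QQ) \to \GL_2(\cO)$ of $\rhobar$ which is odd, has prescribed Hodge--Tate weights $(0,k-1)$ matching the predicted Serre weight, has minimal ramification at primes $p \neq \ell$ dictated by the Serre level $N(\rhobar)$, and is potentially semistable of the expected type at $\ell$. Such lifts are provided by Ramakrishna--type arguments and by the potential modularity machinery (Taylor) combined with local deformation ring computations of Kisin. Moreover one would arrange that $\rho$ sits inside a \emph{compatible system} $\{\rho_\la\}_\la$ of $\la$-adic representations of distinct residue characteristics; this is the crucial flexibility provided by potential modularity over a totally real field.

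Next comes the induction, carried out on the pair (weight, level), or more precisely on the number of primes ramifying and the Hodge--Tate weights. At a favourable prime $\la$ in the compatible system, the mod $\la$ reduction $\overline{\rho_\la}$ has strictly \emph{smaller} invariants and is modular by the inductive hypothesis (base cases being handled by Tate's result that there is no irreducible representation unramified outside $2$ with small weight, Serre's analogous results for small primes, and Dickinson/Schoof-type arguments together with known modularity in low level). Once $\overline{\rho_\la}$ is known to be modular, a suitable modularity lifting theorem (Kisin's $R=T$ theorems for potentially Barsotti--Tate and potentially semistable deformation rings) upgrades $\rho_\la$ itself to a modular representation. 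By compatibility of the system the original $\rho$ is then modular, and reducing modulo~$\ell$ yields modularity of $\rhobar$.

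The main obstacle I expect is twofold. First, verifying the hypotheses of the modularity lifting theorems at each step of the induction, in particular the Taylor--Wiles condition on the image of $\rhobar|_{\Gal(\Qbar/\QQ(\zeta_\ell))}$, requires a careful choice of the auxiliary prime $\la$ and sometimes a base change to an auxiliary totally real field combined with a descent argument. Second, the induction is not linear but \emph{intertwines} weight reduction (killing Hodge--Tate weights by switching levels, via the weight part of Serre's conjecture, proved for $\ell>2$ by Edixhoven, Coleman--Voloch, and Gross), level reduction (Ribet's level lowering), and residue characteristic switching; organising this so that the induction terminates at genuine base cases is the heart of the Khare--Wintenberger argument, and replacing any single ingredient (e.g. the low-level base cases such as $N=1$, or the small-residue-characteristic cases $\ell=3,5,7$ handled by Kisin, Dieulefait, Khare) would break the chain.
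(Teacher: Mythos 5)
The paper does not prove this theorem: it is quoted as a known result, attributed to Khare and Wintenberger (with further contributions by Kisin) and referenced to \cite{KW}, exactly as one would expect in a survey. There is therefore no ``paper's own proof'' to compare against.

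That said, your sketch is a reasonably faithful high-level account of the actual Khare--Wintenberger strategy: a lifting theorem producing geometric $\ell$-adic lifts sitting in strictly compatible systems (enabled by Taylor's potential modularity over totally real fields together with Kisin's local deformation-ring analysis), a double induction on the pair (Serre weight, Serre level) that interleaves weight lowering (Edixhoven, Coleman--Voloch, Gross), level lowering (Ribet), and switching of residue characteristic, modularity lifting theorems of $R=\mathbb{T}$ type at each step, and base cases coming from Tate's and Serre's vanishing results in level~$1$ small weight, from known low-level modularity, and from the small-characteristic cases $\ell=2,3$ treated by Kisin and others. You also correctly flag the need to dispose of the dihedral (CM-induced) case separately and to verify the Taylor--Wiles condition on the image of $\rhobar|_{\Gal(\Qbar/\QQ(\zeta_\ell))}$ at each inductive step. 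Naturally this remains a programme rather than a proof --- every clause you mention compresses a major theorem --- but as an outline it matches the literature and would not be out of place as an expository summary; the one thing to emphasize more is that the ``favourable prime'' and auxiliary base-change/descent steps are where most of the technical work and most of the delicate case distinctions live.
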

Also in this case, the level and the weight of one of the (infinitely many) newforms attached to~$\rhobar$ can be read off from~$\rhobar$:
the level is the prime-to-$\ell$ Artin conductor, and there is a formula for the weight given by Serre.
In fact, it had been known for a long time that if $\rhobar$ is attached to {\em some} newform then it also is to
one with a certain predicted weight and level. Since the predicted weight and level are minimal
(except for two cases in the weight, see \cite{Edixhoven1992}), the process of finding a newform with predicted invariants
is called {\em level lowering} or {\em weight lowering}.
The quest for attached newforms with non-minimal levels is accordingly called {\em level raising}.
These three questions have been completely solved (with a tiny exception when $\ell=2$ and the minimal weight~$1$ is concerned).

With the $\ell$-adic and the mod~$\ell$ cases of irreducible odd $2$-dimensional representations of $G_\QQ := \Gal(\Qbar/\QQ)$ essentially settled,
it is natural to wonder what happens in between, i.e.\ modulo prime powers.
Quite some research has been done, but the picture is far from clear. In fact, very basic questions are still open.

\subsubsection*{Modularity modulo prime powers}

Let us consider a representation (continuous like all representations in this paper)
$$ \rho: G_\QQ \to \GL_2(\cO/\lambda^m)$$
with $\cO$ the valuation ring of a finite extension of~$\QQ_\ell$ and $\lambda$ its valuation ideal.
It turns out that the {\em modularity} of $\rho$ follows from known results if one supposes that the residual representation
$\rhobar$ is absolutely irreducible and odd and satisfies certain technical local conditions.
This was surely known to many experts and had been, for instance, discussed on mathoverflow.
We make this precise in Section~\ref{sec:modularity}.

An important point here is that one needs to use the right notion of {\em modularity}. This difficulty is not visible
when only working $\ell$-adically or modulo~$\ell$.
The second author together with Chen and Kiming introduced in \cite{CKW} three notions of modularity modulo prime powers:
{\em strong modularity}, {\em weak modularity}, {\em dc-weak modularity}. These three notions stem from three notions
of Hecke eigenforms modulo prime powers, also called {\em strong}, {\em weak}, and {\em dc-weak}, which we briefly explain now.

Throughout this article, we understand by a {\em Hecke eigenform}~$f$ with coefficients in a ring~$R$
(they are all normalised and almost all cuspidal without this being said explicitly)
a ring homomorphism $f: \TT \to R$, where $\TT$ is a Hecke algebra (to be specified very soon).
We often think of $f$ as the $q$-expansion $\sum_{n \ge 1} f(T_n) q^n \in R[[q]]$.
Let $\TT_k(\Gamma)$ be the full Hecke algebra, generated as a ring by all Hecke operators $T_n$,
acting faithfully on the space of holomorphic cusp forms $S_k(\Gamma)$ of weight~$k$ and level~$\Gamma$.

A {\em weak} Hecke eigenform of weight~$k$ and level~$\Gamma$ with coefficients in~$R$ is a ring homomorphism
$f: \TT_k(\Gamma) \to R$. It is called {\em strong} if there exists an order $\cO$ in a number field together
with a ring homomorphism $\pi: \cO \to R$ such that $f$ factors as $\TT_k(\Gamma) \to \cO \xrightarrow{\pi} R$.
By embedding the order $\cO$ into~$\CC$, the first arrow leads to $\TT_k(\Gamma) \to \CC$, a holomorphic Hecke eigenform.
In simple terms, strong Hecke eigenforms with coefficients in~$R$ are those that are obtained by applying $\pi$ to the
coefficients of a holomorphic eigenform.

Put $S_{\le b}(\Gamma) = \bigoplus_{k=1}^b S_k(\Gamma)$ and let $\TT_{\le b}(\Gamma)$ be the full Hecke algebra acting faithfully
on it. A ring homomorphism $f: \TT_{\le b}(\Gamma) \to R$ is called a {\em dc-weak eigenform} of level~$\Gamma$
(and weights $\le b$; in fact, $b$ will not play any role as long as it is large enough).
A dc-weak eigenform can hence have contributions from many different weights, as is the case for divided congruences,
which is what the abbreviation `dc' stands for.
If $R$ is a finite field or $\Fbar_\ell$, all three notions coincide by the Deligne-Serre lifting lemma (for a presentation
in the setup used here, see \cite[Lemma 16]{CKW}), but they are different in general.
An example that strong is stronger than weak modulo~$\ell^m$ for $m>1$, even if one allows the weight to change
(but not the level), is given in \cite[\S2.5]{KRW}.

As rings of coefficients $R$, we take in this article rings of the form $\cO/\lambda^m$ where $\cO$ is the valuation ring
of a finite field extension of~$\QQ_\ell$, $\lambda$ is its valuation ideal and $m$ a positive integer.
Many results can be and are phrased in this way.
However, a general difficulty exists: we often need to compare two eigenforms, one with coefficients in 
$\cO_1/\lambda_1^{m_1}$, the other one with coefficients in $\cO_2/\lambda_2^{m_2}$.
One then needs to find a ring containing both. In order for such a ring to exist, it is necessary that
$\lambda_i^{m_i} \cap \ZZ_\ell$ for $i=1,2$ both yield the same power of~$\ell$, say $\ell^m$.
This led the second author together with Taix\'es i Ventosa \cite{Taixes2010} to introduce the ring
$$\Zmod{\ell^m} = \Zbar_\ell / \{x \in \Zbar_\ell \;|\; v(x) > m-1\},$$
where $v$ denotes the normalised valuation, i.e.\ $v(\ell)=1$.
We always consider $\Zmod{\ell^m}$ with the discrete topology.
We have $\Zmod{\ell} = \Fbar_\ell$ and for the valuation ring $\cO$ of any finite extension of~$\QQ_\ell$
with absolute ramification index~$e$ and valuation ideal~$\lambda$, the quotient $\cO/\lambda^{e(m-1)+1}$ injects into $\Zmod{\ell^m}$.
This quotient is the smallest one that extends $\ZZ/\ell^m\ZZ$.
The ring $\Zmod{\ell^m}$ is a local $\ZZ/\ell^m\ZZ$-algebra of Krull dimension~$0$ with residue field $\Fbar_\ell$ and
the ring extension $\ZZ/\ell^m\ZZ \subseteq \Zmod{\ell^m}$ is integral.
Any finitely generated subring $R$ of $\Zmod{\ell^m}$ is contained in some ring $\cO/\lambda^{e(m-1)+1}$ as above.
These are free as $\ZZ/\ell^m\ZZ$-modules, but this is not true for all finite subrings of $\Zmod{\ell^m}$.
A Hecke eigenform with coefficients in $\Zmod{\ell^m}$ shall simply be called a {\em modulo~$\ell^m$ Hecke eigenform}.
Dc-weak (and hence also weak) Hecke eigenforms modulo~$\ell^m$ have attached Galois representations,
under the condition that the residual representation is absolutely irreducible (see \cite[Theorem 3]{CKW}).

\subsubsection*{Weight lowering and finiteness}

Let us recall now that in a fixed prime-to-$\ell$ level, there are only finitely many modular Galois representations
with coefficients in $\Fbar_\ell$, which can all be realised -- up to twist -- in weights up to $\ell+1$ and that there
is an explicit recipe for the minimal weight.

A natural question is whether there is a recipe for a minimal weight for strong eigenforms modulo~$\ell^m$, i.e.\ whether (almost) all
the (prime-indexed) Hecke eigenvalues of a given strong eigenform $f$ modulo~$\ell^m$ in weight~$k$ and level~$N$
(prime to~$\ell$) also occur for a strong eigenform $g$ modulo~$\ell^m$ in the same level~$N$ and a `low' or `minimal' weight that can be
calculated from the restriction to a decomposition group at~$\ell$ of the Galois representation attached to~$f$ (under
the assumption of residual absolute irreducibilty).

This question seems to be very difficult. One is then led to consider the question, for fixed prime-to~$\ell$ level~$N$, whether the set
$$ \{ \sum_{n \ge 1} f(T_n) q^n \in \Zmod{\ell^m}[[q]] \;|\; f \textnormal{ strong eigenform modulo $\ell^m$ of level~$N$, any weight }\}$$
is finite. It can also be seen as the set of reductions modulo~$\ell^m$ of all holomorphic Hecke eigenforms in level~$N$ of any weight.
The second author together with Kiming and Rustom conjectures that this is the case (\cite[Conjecture 1]{KRW}).
As is shown in Theorem~2 of loc.~cit., a positive answer to a question of Buzzard \cite[Question 4.4]{Buzzard} would indeed imply this.
As an indication towards finiteness or the potential existence of a weight recipe as alluded to above,
\cite[Theorem 3]{KRW}, proved with the help of Frank Calegari, shows that for $\ell \ge 5$,
there exists a bound $B=B(N,\ell^m)$ such that the $q$-expansion
of any {\em strong} Hecke eigenform modulo~$\ell^m$ of level~$N$, but any weight, already occurs in weight~$k \le B$
for some {\em weak} Hecke eigenform modulo~$\ell^m$ of level~$N$.
One should compare this with the level raising and level lowering results below, which also `only' lead to weak forms.

Some first experimentation has led Kiming, Rustom and the second author to state the formula
$$B(N,\ell^m) = 2 \ell^m + \ell^2 + 1$$
for $m \ge 2$. It is consistent with the available computational data, but should not be understood as a conjecture at this point.

\subsubsection*{Level raising}

Led by classical level raising results, one can hope that similar statements are true modulo~$\ell^m$.
It seems that part of the theory indeed carries over from modulo~$\ell$ to modulo~$\ell^m$ eigenforms.
In Section~\ref{sec:raising} we prove a level raising result for weight~$2$ eigenforms on $\Gamma_0(N)$.
For $\ell>2$, this result is as general as possible. Only for $\ell=2$ some rare cases could not be proved.

Let $f: \TT \to \Zmod{\ell^m}$ be a weak modulo~$\ell^m$ Hecke eigenform with Galois representation~$\rho$.
The main idea is to extend Ribet's `classical' geometric approach of level raising to our more general situation.
For that we need to realise~$\rho$ on the Jacobian $J$ of the appropriate modular curve.
It is well known that the Hecke algebra acts faithfully on~$J$. However, we need that $\TT/\ker(f)$, i.e. the image of the weak
eigenform, also acts faithfully on a subgroup of the Jacobian. The natural place is $J(\Qbar)[\ker(f)]$.
It turns out that this faithfulness does not seem to be that clear.
In fact, we currently make use of the `multiplicity one' property for the residual Galois representation on the Jacobian and, equivalently,
the Gorenstein property of the residual Hecke algebra.
Once this faithfulness is established, the proof proceeds by comparing the new and the old subvarieties in level~$Np$
as in Ribet's original work~\cite{Ribet90a}.
One should expect similar limitations when extending level raising modulo~$\ell^m$ to higher weights,
e.g.\ the weight will likely have to be less than~$\ell$ if one wants complete results in order to remain in the multiplicity one situation,
where faithfulness is known and easily obtained from existing results.

\subsubsection*{Level lowering and other results}

Another natural domain is that of level lowering modulo~$\ell^m$.
The principal idea is that one should always be able to find an eigenform giving rise to a given Galois representation
when the level is equal to the Artin conductor of the representation.
An immediate difficulty is then, of course, to define an Artin conductor for Galois representations modulo~$\ell^m$.
It does not seem to be immediately clear how to do this because not every module over $\ZZ/\ell^m\ZZ$ is free, so that there is
no natural analog for the dimension (of, say, inertia invariants) used in the classical Artin conductor.
Nevertheless, one can at least ask whether one can always find a modulo~$\ell^m$ Hecke eigenform of a level which is only
divisible by primes ramifying in the representation.
There are, indeed, two such results, one is due to Dummigan, and the other one due to Camporino and Pacetti. We quote
both in Section~\ref{sec:lowering}. Dummigan's result, similar to our level raising theorem, works geometrically on cohomology, whereas
Camporino and Pacetti use the deformation theory of Galois representations. Both approaches currently seem to lead to
some restrictions (a congruence condition for Dummigan, and unramified coefficients for Camporino--Pacetti).

Concerning generalisations along the lines of level lowering results modulo~$\ell$, which are based on the use of Shimura curves,
the mod~$\ell^m$ Galois representation must first be realised in the cohomology (or the Jacobian) of the appropriate Shimura curve.
This is likely going to lead into faithfulness problems analogous to the one we solved in the level raising result by appealing to
the Gorenstein or multiplicity one condition.

As a further instance of level lowering (though of a slightly different nature), we mention the following result from
\cite[Theorem 5]{CKW}: Any dc-weak eigenform modulo~$\ell^m$ in level~$N\ell^r$ already arises from a dc-weak eigenform modulo~$\ell^m$
in level~$N$, under the hypotheses $\ell \ge 5$ and that the mod~$\ell$ reduction has an absolutely irreducible Galois representation.
It is also shown that even if one starts with a strong eigenform modulo~$\ell^m$, the one in level~$N$ will only be dc-weak, in general.

Another natural direction is to extend companionship results from eigenforms modulo~$\ell$ to $\ell^m$.
This has been successfully performed by Adibhatla and Manoharmayum in \cite{Raja} for odd~$\ell$ and ordinary modular
forms with coefficients unramified at~$\ell$, under certain conditions. In fact, that work is set in the more general world
of Hilbert modular forms. Another companionship result modulo prime powers has been achieved by the first author together
with Adibhatla~\cite{RaPa}.

\subsubsection*{Computations, algorithm and database}

Next to the theoretical and structural motivation for studying modular forms and modularity questions modulo prime powers,
there is also a strong computational driving force: realising $\ell$-adic modular forms on a computer is only possible
up to a certain precision, i.e.\ one necessarily realises modular forms modulo~$\ell^m$.

This also naturally leads to the questions studied in this article. For instance, if one wants to compute modulo which power of~$\ell$
a modular $\ell$-adic Galois representation~$\rho$ of conductor~$Np$ (with $p$ a prime not dividing $N$) becomes unramified at~$p$,
one can test whether the system of Hecke eigenvalues modulo~$\ell^m$ also occurs in level~$N/p$ for $m=1,2,\dots$ until this fails.
If it first fails at~$m+1$, then $\rho$ modulo~$\ell^m$ is known to be unramified at~$p$.
In cases where level lowering modulo~$\ell^m$ is entirely proved, one also gets that $\rho$ modulo~$\ell^{m+1}$ does ramify at~$p$.
The authors know of no other way of obtaining such information of an $\ell$-adic modular Galois representation.

The authors have developed several algorithmic tools for handling modular forms modulo~$\ell^m$ and they have set up a database.
Section~\ref{sec:alg} contains a brief exposition of how to compute decompositions of commutative algebras into local factors in situations
arising from Hecke algebras, and how to perform weak modularity tests explicitly.
Finally, in Section~\ref{sec:database} we describe features of the database of modular form orbits and higher congruences
that we have developed.

\subsubsection*{Acknowledgements}

The authors would like to thank Rajender Adibhatla, Sara Arias-de-Reyna, Gebhard B\"ockle, Frank Calegari, Imin Chen, Shaunak Deo,
Frazer Jarvis, Ian Kiming, Ariel Pacetti, Nadim Ruston
and many others for various discussions about topics on modular Galois representations modulo prime powers.
They also thank Ken Ribet for having pointed out an inaccuracy in a previous version.
Thanks are also due to the referee for a careful reading and useful suggestions.
The second author thanks Gabi Nebe for having explained the simple algorithmic idempotent lifting (Eq.~\eqref{eq:idemlift}) to him a long time ago.

This project was supported by the Luxembourg Research Fund (Fonds National de la Re\-cherche Luxembourg) INTER/DFG/12/10/COMFGREP in the framework
of the priority program 1489 of the Deutsche Forschungsgemeinschaft.

\section{Modularity}\label{sec:modularity}

In this section we prove the following modularity theorem. This theorem has been known to the experts and is a pretty
straight forward application of `bigR=bigT' theorems.

\begin{theorem}\label{thm:modularity}
Let $\ell \ge 5$ be a prime number, let $\Sigma$ be a finite set of primes not containing~$\ell$ and let
$G_{\QQ,\Sigma\cup\{\infty,\ell\}}$ be the Galois group of the maximal extension of~$\QQ$ unramified outside $\Sigma\cup\{\ell,\infty\}$.
Consider a continuous Galois representation
$$ \rho: G_{\QQ,\Sigma\cup\{\infty,\ell\}} \to \GL_2(\Zmod{\ell^m})$$
such that the residual representation $\rhobar$ satisfies:
\begin{itemize}
\item $\rhobar$ is odd,
\item $\rhobar|_{G_{\QQ(\zeta_p)}}$ is absolutely irreducible,
\item $\rhobar|_{G_{\QQ_\ell}} \not\sim \chi \otimes \fmat 1*01$ and $\rhobar|_{G_{\QQ_\ell}} \not\sim \chi \otimes \fmat 1*0{\overline{\epsilon}}$,
for any $\Fbar_\ell$-valued character~$\chi$ of $G_{\QQ_\ell}$ and the mod~$\ell$ cyclotomic character~$\overline{\epsilon}$
(where $*$ may or may not be zero).
\end{itemize}
Let $N$ be the maximal positive integer divisible only by primes in~$\Sigma$ such that there is a newform of level~$N$ (and some weight)
giving rise to~$\rhobar$.

Then $\rho$ is dc-weakly modular of level~$N$, i.e.\ $\rho \cong \rho_f$ with $f$ a dc-weak Hecke eigenform modulo~$\ell^m$ of level~$N$.
\end{theorem}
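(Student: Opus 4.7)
The plan is to deduce the theorem from a "big $R = T$" modularity lifting theorem applied to the residual representation~$\rhobar$, exploiting the fact that the hypotheses listed on~$\rhobar$ are exactly those needed to invoke such a theorem.

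First, I would observe that by the Khare--Wintenberger theorem, $\rhobar$ is modular; by the maximal choice of~$N$, there is a newform $f_0$ of level~$N$ and some weight~$k_0$ that realises~$\rhobar$. The oddness hypothesis and the absolute irreducibility of $\rhobar|_{G_{\QQ(\zeta_\ell)}}$ are standard Taylor--Wiles patching conditions, while the two excluded shapes of $\rhobar|_{G_{\QQ_\ell}}$ are precisely the configurations ruled out in Kisin's and Emerton's modularity lifting theorems for~$\ell > 2$.

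Next, I would set up the appropriate deformation problem. Let $R$ be a universal deformation ring classifying deformations of~$\rhobar$ to complete local Noetherian $W(\Fbar_\ell)$-algebras that are unramified outside $\Sigma \cup \{\ell, \infty\}$, satisfy at each $p \in \Sigma$ a local deformation condition reflecting the inertial type of~$f_0$ at~$p$ (so that the resulting automorphic forms have level exactly~$N$), and at~$\ell$ satisfy a suitable potentially semistable condition. On the automorphic side, I take the localisation $\TT_\fm := \TT_{\le b}(\Gamma_1(N))_\fm$ of the full Hecke algebra acting on cusp forms of level~$N$ and all weights up to a large enough bound~$b$, at the maximal ideal $\fm$ associated to~$\rhobar$. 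The big $R = T$ results of Wiles, Taylor--Wiles, Diamond, Fujiwara, Kisin and Emerton yield an isomorphism $R \xrightarrow{\sim} \TT_\fm$ (up to nilpotents, which is harmless here).

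Finally, the given $\rho$ is itself a deformation of~$\rhobar$ whose ramification is controlled by $\Sigma \cup \{\ell, \infty\}$ and whose local conditions at primes in~$\Sigma$ and at~$\ell$ are automatic from those built into~$R$ (thanks to the maximality of~$N$ at the finite places and to the excluded local shapes at~$\ell$). Hence $\rho$ corresponds to a homomorphism $R \to \Zmod{\ell^m}$; composing with the $R = T$ isomorphism produces a ring homomorphism $\TT_{\le b}(\Gamma_1(N)) \to \Zmod{\ell^m}$, which by definition is a dc-weak Hecke eigenform~$f$ modulo~$\ell^m$ of level~$N$, and whose attached Galois representation (via \cite[Theorem 3]{CKW}) is~$\rho$ by construction.

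The main obstacle is choosing the deformation problem precisely enough that (i) the associated Hecke algebra acts on forms of exactly level~$N$, so that the local types at $p \in \Sigma$ induced by the maximal newform~$f_0$ are forced on every deformation, and (ii) one can allow the weight to vary, which is what produces a dc-weak rather than a strong or weak eigenform. Once the deformation problem and the corresponding "big" Hecke algebra summed over weights are set up consistently, the modularity lifting theorem is a black box and the rest of the argument is formal.
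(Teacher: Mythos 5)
Your overall strategy -- deducing the theorem from a big \(R=\mathbb{T}\) isomorphism localized at~\(\rhobar\), so that \(\rho\) becomes a point of the deformation ring and hence of the Hecke algebra -- is the same route the paper takes. But there are two genuine gaps.

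First, the deformation problem you propose is too restrictive. You impose at each \(p\in\Sigma\) ``a local deformation condition reflecting the inertial type of~\(f_0\) at~\(p\)'' and claim the given~\(\rho\) automatically satisfies it ``thanks to the maximality of~\(N\)''. That does not follow: the maximality of~\(N\) says something about modular lifts of~\(\rhobar\), but the input~\(\rho\) is an arbitrary deformation unramified outside \(\Sigma\cup\{\ell,\infty\}\), and its inertial behaviour at a prime \(p\in\Sigma\) need not match that of the chosen newform~\(f_0\). If you fix an inertial type, \(\rho\) simply may not define a point of~\(R\). The paper's proof avoids this by taking the \emph{unrestricted} universal deformation ring \(R_\rhobar\) for \(G_{\QQ,\Sigma\cup\{\infty,\ell\}}\) (no local conditions at finite places beyond the ramification set) and invoking Deo's strengthening (Theorem~5 of~\cite{Deo}, building on Gouv\^ea--Mazur, B\"ockle, Diamond--Flach--Guo, Kisin), which asserts \(R_\rhobar\cong\TT'_\cO(\Gamma_1(N))_\rhobar\) -- precisely the all-weights partial Hecke algebra at the optimal prime-to-\(\ell\) level~\(N\). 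The ``classical'' fixed-weight modularity lifting theorems you cite do not directly give this.

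Second, and more seriously for the conclusion you want, the \(R=\mathbb{T}\) theorems identify the deformation ring with the \emph{anemic} Hecke algebra \(\TT'\) generated by the \(T_n\) with \((n,N\ell)=1\), not with the full Hecke algebra \(\TT_{\le b}(\Gamma_1(N))\). A dc-weak eigenform modulo~\(\ell^m\) is by definition a ring homomorphism on the \emph{full} Hecke algebra. Extending the homomorphism \(f:\TT'_\cO(\Gamma_1(N))_\rhobar\to\cO/\lambda^w\) to the full Hecke algebra is the actual technical content of the proof, and you have skipped it entirely: the paper first extends \(f\) to the partially full algebra \(\TT^\pf_\cO(\Gamma_1(N))_\rhobar\) using the finiteness (integrality) result~\cite[Prop.~6]{Deo}, passes to level~\(N\ell\) via a degeneracy map, adjoins a choice of \(f(U_\ell)\), and then uses~\cite[Prop.~5]{Deo} to recognise \(\TT^\pf_\cO(\Gamma_1(N\ell))_\rhobar[[U_\ell]]\) as a quotient of the full Hecke algebra \(\TT_\cO(\Gamma_1(N\ell))\), finally stripping~\(\ell\) from the level with~\cite[Theorem~5]{CKW}. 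Without this extension step your output is a system of eigenvalues away from~\(N\ell\), not a dc-weak eigenform in the required sense.
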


In the exposition of the theory, we essentially follow Deo's paper~\cite{Deo}. Let us assume the notation and the set-up from
Theorem~\ref{thm:modularity}.
Let $\cO$ be the valuation ring of a finite extension of~$\QQ_\ell$ with ramification index~$e$, valuation ideal~$\lambda$ and
residue field~$\FF$ such that (possibly after conjugation) $\rho$ takes values in $\GL_2(\cO/\lambda^w) \subset \GL_2(\Zmod{\ell^m})$ with $w = e (m-1)+1$.
Let $\TT'_\cO(\Gamma_1(N))$ be defined as the projective limit over~$b$ of $\cO \otimes \TT'_{\le b}(\Gamma_1(N))$ which are defined
precisely like $\TT_{\le b}(\Gamma_1(N))$, but only take Hecke operators $T_n$ with $n$ coprime to $N\ell$ into account.
Similarly, like Deo we define the partially full Hecke algebra $\TT_\cO^\pf(\Gamma_1(N))$
as the projective limit of $\cO \otimes \TT^\pf_{\le b}(\Gamma_1(N))$ by using in addition the operators $U_q$ for primes $q \mid N$.
If we localise at the system of eigenvalues afforded by~$\rhobar$, we denote this by $\rhobar$ in the index.
Accordingly, denote by $R_\rhobar$ the universal deformation ring of~$\rhobar$ for the group $G_{\QQ,\Sigma\cup\{\infty,\ell\}}$
in the category of local profinite $\cO$-algebras with residue field~$\FF$.

\begin{theorem}[B\"ockle, Diamond--Flach--Guo, Gouv\^ea--Mazur, Kisin]\label{thm:bigRbigT}
Assume the set-up of Theorem~\ref{thm:modularity}.
Then $R_\rhobar \cong \TT'_\cO(\Gamma_1(N))_\rhobar$.
\end{theorem}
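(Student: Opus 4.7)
The plan is to follow the standard $R=T$ strategy: produce a map $R_\rhobar \to \TT'_\cO(\Gamma_1(N))_\rhobar$ by the universal property of the deformation ring, check that it is surjective (which is easy), and then establish injectivity by invoking modularity lifting weight by weight and assembling the result.

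First, I would construct a deformation of $\rhobar$ with values in $\GL_2\bigl(\TT'_\cO(\Gamma_1(N))_\rhobar\bigr)$. The full Hecke algebra $\TT'_\cO(\Gamma_1(N))_\rhobar$ is a complete local Noetherian $\cO$-algebra with residue field~$\FF$; on each classical quotient corresponding to a newform $g$ in some weight, Deligne's construction yields a Galois representation $\rho_g$ unramified outside $\Sigma\cup\{\ell,\infty\}$ whose trace of Frobenius at $q \nmid N\ell$ is the image of~$T_q$. Taking the product over all such $g$ (whose residual representation is $\rhobar$) and using that $\rhobar$ is absolutely irreducible, the associated pseudo-representation lifts to a genuine two-dimensional Galois representation on the limit, deforming $\rhobar$. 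By the universal property this produces a continuous $\cO$-algebra map
\[
\phi \colon R_\rhobar \longrightarrow \TT'_\cO(\Gamma_1(N))_\rhobar .
\]
Surjectivity of $\phi$ follows immediately because $\TT'_\cO(\Gamma_1(N))_\rhobar$ is topologically generated over $\cO$ by the $T_q$ for $q \nmid N\ell$, and each such $T_q$ lies in the image (as the trace of Frobenius under the universal deformation).

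For injectivity I would argue weight by weight and then pass to the limit. For each $k \ge 2$, let $R_\rhobar^{(k)}$ be the quotient of $R_\rhobar$ classifying deformations that are crystalline (or the appropriate local condition forced by the Hecke eigenforms) at $\ell$ with Hodge--Tate weights $\{0,k-1\}$; the classical modularity lifting theorems of Diamond--Flach--Guo in the ordinary case and Kisin in the potentially crystalline case yield
\[
R_\rhobar^{(k)} \;\cong\; \TT'_\cO\bigl(\Gamma_1(N),k\bigr)_\rhobar .
\]
Here the hypotheses $\ell\ge 5$, absolute irreducibility of $\rhobar|_{G_{\QQ(\zeta_\ell)}}$ (needed for the existence of Taylor--Wiles primes), oddness of $\rhobar$, and the exclusion of the two ``bad'' local shapes at~$\ell$ are exactly what one needs to make the Taylor--Wiles--Kisin patching go through and to ensure that the local deformation ring at $\ell$ has the right dimension. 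The residual modularity provided by Khare--Wintenberger supplies the starting point. To go from the collection of weight-$k$ isomorphisms to the full statement, one uses Böckle's structural results identifying $R_\rhobar$ as a complete intersection of the expected relative dimension together with the Gouvêa--Mazur theory of $\ell$-adic families, which shows that the weight-$k$ quotients sit densely inside $\Spec R_\rhobar[1/\ell]$ and correspondingly in $\Spec \TT'_\cO(\Gamma_1(N))_\rhobar[1/\ell]$. Since $\phi$ is surjective and induces an isomorphism on each classical quotient, and both sides are $\cO$-flat (or their $\ell$-power torsion is pinned down by the fixed-weight statements), $\phi$ must be an isomorphism.

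The main obstacle is the fixed-weight modularity lifting step: verifying that the local condition at $\ell$ that the Hecke algebra naturally sees agrees with the condition under which Kisin's theorem yields $R^{(k)} = \TT^{(k)}$, and checking that the conditions in the statement of Theorem~\ref{thm:modularity} really suffice for patching in every weight (in particular eliminating the two pathological residual local shapes that cause Kisin's framework to fail or give the wrong local ring). Once the weight-$k$ $R=T$ is uniform, passing to the limit is a formal consequence of the structural results of Böckle and Gouvêa--Mazur quoted in the attribution.
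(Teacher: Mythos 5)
The paper does not prove this statement: its entire ``proof'' is a citation to Theorem~5 of Deo's paper, with remarks that Deo's argument upgrades Böckle's big~$R=$~big~$T$ theorem by plugging in the stronger fixed-weight modularity lifting results of Diamond--Flach--Guo, and that one can alternatively apply Emerton's $\ell$-adic theorem to a characteristic-zero lift of~$\rho$ (whose existence is the issue; Khare--Ramakrishna handle the ordinary case). Your sketch therefore goes one level deeper than the paper and outlines the strategy of the cited works themselves; it is broadly the right outline: build $\phi\colon R_\rhobar\to\TT'_\cO(\Gamma_1(N))_\rhobar$ from the universal property, get surjectivity from generation of the Hecke algebra by traces of Frobenius, and get injectivity by assembling fixed-weight $R=T$ theorems. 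You also correctly identify what each hypothesis and each name in the attribution contributes.

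The one place you are too loose is the last step, and I flag it because you label it a ``main obstacle'' without spelling out what is actually missing. Surjectivity of $\phi$ together with an isomorphism on each classical quotient does \emph{not} by itself give $\ker\phi=0$. The actual content of Böckle's argument (inherited by Deo) is a Krull-dimension bound on $R_\rhobar$ obtained from the Gouvêa--Mazur infinite fern, combined with reducedness and $\ZZ_\ell$-flatness of the big Hecke algebra and Zariski density of classical points in its spectrum; these together force $\ker\phi=0$. In particular, $\cO$-flatness of $R_\rhobar$ is an \emph{output} of this argument, not something you may assume on the deformation-ring side as your parenthetical ``both sides are $\cO$-flat'' suggests. (A minor point: the Diamond--Flach--Guo input is the Fontaine--Laffaille case, not the ordinary case as you wrote.) Since the paper sidesteps all of this by direct citation, your proposal is more ambitious than the paper's treatment but correspondingly incomplete precisely where the real work sits.
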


This is Theorem~5 from~\cite{Deo}. Note that Deo works with pseudo-re\-pre\-sen\-tations, but this comes down to the same thing here
because we assume $\rhobar$ to be irreducible. In the proof, Deo essentially explains why the results of \cite{DiamondFlachGuo} allow
to strengthen the conclusions of \cite{Boeckle}. A similar discussion can also be found in \cite[\S7.3]{EmertonLG}, where the theorem
is, however, not stated in the form we need here.
Alternatively, one can also invoke \cite[Theorem~1.2.3]{EmertonLG} to an $\ell$-adic lift of~$\rho$, provided
such a lift exists. Recent work by Khare and Ramakrishna~\cite{KR} provides a construction in the ordinary case.

We now apply Theorem~\ref{thm:bigRbigT}. By assumption, $\rho$ is a deformation of~$\rhobar$ with the right ramification set,
whence the universality leads to an $\cO$-algebra homomorphism $R_\rhobar \to \cO/\lambda^w$, which we consider as an $\cO$-algebra homomorphism
$$ f :  \TT'_\cO(\Gamma_1(N))_\rhobar \to \cO/\lambda^w.$$
By construction, the Galois representation associated with~$f$ is isomorphic to~$\rho$.

In order to finish the proof of Theorem~\ref{thm:modularity}, $f$ has to be extended to the full Hecke algebra in order to make
it a genuine Hecke eigenform modulo~$\ell^m$.
Next we use that $\TT_\cO^\pf(\Gamma_1(N))_\rhobar$ is finite over $\TT'_\cO(\Gamma_1(N))_\rhobar$. This is proved in \cite[Proposition~6]{Deo};
one should note that the $\Gamma_1(N)$-new assumption is not necessary for this statement (see the proof of \cite[Theorem~3]{Deo}).
This integrality allows us to extend $f$ to an $\cO$-algebra homomorphism
$$ f :  \TT^\pf_\cO(\Gamma_1(N))_\rhobar \to \tilde{\cO}/\tilde{\lambda}^{\tilde{w}}$$
where $\cO \subseteq \tilde{\cO}$ is the valuation ring of some finite extension of~$\QQ_\ell$ with valuation ideal $\tilde{\lambda}$
and ramification index~$\tilde{e}$ and $\tilde{w} = \tilde{e}(m-1)+1$. One is able to make this extension because one only needs to find one zero
in some ring of the form $\tilde{\cO}/\tilde{\lambda}^{\tilde{w}}$ for any monic polynomial with coefficients in $\cO/\lambda^w$;
that this is possible follows, for instance, by choosing any monic lift to~$\cO$.
From the natural degeneracy map, we next get an $\cO$-algebra homomorphism
$ f :  \TT^\pf_\cO(\Gamma_1(N\ell))_\rhobar \to \tilde{\cO}/\tilde{\lambda}^{\tilde{w}}$,
which after choice of $f(U_\ell)$ leads to the $\cO$-algebra homomorphism
$$ f :  \TT^\pf_\cO(\Gamma_1(N\ell))_\rhobar[[U_\ell]] \to \tilde{\cO}/\tilde{\lambda}^{\tilde{w}}.$$
According to \cite[Proposition~5]{Deo}, one can identify $\TT^\pf_\cO(\Gamma_1(N\ell))_\rhobar[[U_\ell]]$ with a quotient of the full
Hecke algebra $\TT_\cO(\Gamma_1(N\ell))$. We obtain thus an $\cO$-algebra homomorphism
$$ f : \TT_\cO(\Gamma_1(N\ell)) \to \tilde{\cO}/\tilde{\lambda}^{\tilde{w}}.$$
As its image is finite, it will factor through $\cO \otimes \TT_{\le b}(\Gamma_1(N\ell))$ for a suitable weight bound~$b$,
so that we finally get a ring homomorphism
$$ f:  \TT_{\le b}(\Gamma_1(N\ell)) \to \tilde{\cO}/\tilde{\lambda}^{\tilde{w}}.$$
This is the dc-weak eigenform that is needed to finish the proof of Theorem~\ref{thm:modularity}.
Note that one can still remove $\ell$ from the level of the final form because of \cite[Theorem 5]{CKW}.

\section{Level raising via modular curves}\label{sec:raising}

Let $p$ be a rational prime. Then one has a natural inclusion map
$$S_k(\Gamma_0(N))\oplus S_k(\Gamma_0(N)) \to S_k(\Gamma_0(Np)),$$
the image of which is called the {\em $p$-old subspace}.
This subspace is stable under the action of $\TT_k(Np):= \TT_k(\Gamma_0(Np))$ and so is its orthogonal complement under the Petersson inner product.
This complementary subspace is called the {\em $p$-new subspace} and we denote by $\TT_k^{p-\textrm{new}}(Np)$ the quotient of $\TT_k(Np)$ that acts faithfully on it.
We will call this quotient the {\em $p$-new quotient of $\TT_k(Np)$}. There is also the {\em $p$-old quotient} that is defined in the obvious way.

We can now state the main level raising result of this article.
\begin{theorem}\label{thm:main}
Let $R$ be a local topological ring with maximal ideal $\fm_R$.
Let $\rho : G_\QQ \to \GL_2(R)$ be a continuous Galois representation that is modular, associated with a weak eigenform $\theta:\TT_2(N)\to R$,
and such that the residual representation $\rhobar: G_\QQ \to \GL_2(R/\fm)$ is absolutely irreducible.
If the characteristic of $R/\fm$ is~$2$, assume the {\em multiplicity one/Gorenstein condition} that $\rhobar$ is not unramified at~$2$ with scalar Frobenius.

Let $p$ be a prime which satisfies the {\em level raising condition} for~$\rho$ by which we mean that $\rho$ is unramified at~$p$ and
$$\Tr(\rho(\Frob_p)) = \pm (p+1).$$
Then the image of $\theta$ is a finite ring, $R/\fm$ is a finite field and
$\rho$ is also associated with a weak eigenform $\theta':\TT_2(Np)\to R$ which is new at $p$,
i.e.\ $\theta'$ factors through $\TT_2^{p-\textrm{new}}(Np)$.
\end{theorem}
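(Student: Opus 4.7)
The approach is to extend Ribet's geometric proof of level raising \cite{Ribet90a} from residual coefficients to the ring $R$. Throughout, write $\TT := \TT_2(N)$, let $\fa = \ker(\theta)$ and $\fn = \theta^{-1}(\fm_R)$, so that $\TT/\fa \hookrightarrow R$ and $\TT/\fn \hookrightarrow R/\fm_R$. For the finiteness assertion, recall that $\TT$ is a free $\ZZ$-module of finite rank, so $\TT/\fn$ is a finitely generated $\ZZ$-domain embedded in $R/\fm_R$; Chebotarev density together with Brauer--Nesbitt identifies $\TT/\fn$ with the subring of $R/\fm_R$ generated by the traces of $\rhobar$, and a standard continuity argument (using compactness of $G_\QQ$ and absolute irreducibility of $\rhobar$) forces this subring to be a finite field. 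Consequently $R/\fm_R$ is a finite field and $\TT/\fa$ is a finite local ring.

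The crux of the proof is a faithful geometric realisation of $\rho$ on the $\fa$-torsion of the Jacobian $J := J_0(N)$. Under absolute irreducibility of $\rhobar$ together with the $\ell = 2$ exclusion, the classical multiplicity-one/Gorenstein theorem applies: the localised Hecke algebra $\TT_\fn$ is Gorenstein and the $\fn$-adic Tate module $\varprojlim_n J[\fn^n]$ is free of rank $2$ over $\TT_\fn$. It follows by Nakayama that $V := J(\Qbar)[\fa]$ is free of rank $2$ over $\TT/\fa$, and its $G_\QQ$-action realises $\rho$ after extension of scalars to $R$.

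With this faithful model in place, the level-raising argument proceeds essentially as in Ribet~\cite{Ribet90a}, with $\FF_\ell$-coefficients systematically replaced by $\TT/\fa$-coefficients. Let $\pi_1, \pi_p : X_0(Np) \to X_0(N)$ be the two degeneracy maps, inducing $\alpha = (\pi_1^*, \pi_p^*) : J \times J \to J_0(Np)$; the $p$-new quotient of $J_0(Np)$ is, up to isogeny, the cokernel of $\alpha$. The character group of the toric part of the N\'eron model of $J_0(Np)$ at $p$ carries a Hecke action for which Grothendieck's monodromy pairing and the Eichler--Shimura congruence give $T_p = \Frob_p + p\,\Frob_p^{-1}$ on the relevant fibre. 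The level-raising hypothesis $\theta(T_p) = \Tr(\rho(\Frob_p)) = \pm(p+1)$ then translates into $(\Frob_p \mp 1)(\Frob_p \mp p) \equiv 0 \pmod{\fa}$ on $V$, which is precisely Ribet's criterion for lifting the inclusion $V \hookrightarrow J_0(Np)$ to a map into the $p$-new quotient. This yields a non-zero Hecke-equivariant map, and hence a surjective $\TT$-algebra homomorphism $\TT_2^{p-\textrm{new}}(Np) \twoheadrightarrow \TT/\fa \hookrightarrow R$, whose composite is the desired $p$-new weak eigenform $\theta'$; that $\rho$ is associated with $\theta'$ then follows from Chebotarev and agreement of traces at all $q \nmid Np$.

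The principal obstacle, as flagged in the introduction, is the faithfulness step on the Jacobian. At the residual level, multiplicity one on $J_0(N)$ is classical except in a small number of cases covered by the $\ell = 2$ hypothesis; promoting it from $J[\fn]$ to $J[\fa]$ is the reason the proof needs Gorensteinness of $\TT_\fn$ explicitly, whereas Ribet's original mod-$\ell$ argument works directly with the two-dimensional $\TT/\fn$-vector space $J[\fn]$. Once faithfulness of the $\TT/\fa$-action on $V$ is established, the remainder of the argument is a careful but essentially formal transcription of Ribet's mod-$\ell$ geometry to the coefficient ring $R$.
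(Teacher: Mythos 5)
Your overall strategy is the right one — realise $\rho$ faithfully on the $I$-torsion of $J_0(N)$, push it into $J_0(Np)$ via the degeneracy maps, show that the image lands in the $p$-new part, and note that the faithfulness step is where multiplicity one/Gorensteinness enters. However, there are two genuine gaps.

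First, the finiteness assertion. You try to deduce finiteness of $\TT/\fn$ (and hence of $R/\fm_R$) purely from continuity, compactness of $G_\QQ$, and irreducibility of $\rhobar$. This does not work: $R$ is merely a local topological ring, and nothing forces the trace ring of a continuous absolutely irreducible representation into such an $R$ to be finite (think of the $\ell$-adic representation of a non-CM newform with $R$ a $\ell$-adic field, where $\fm_R=0$). The finiteness in the theorem is a \emph{consequence of the level raising condition}, not of continuity. The paper's argument (Lemma~\ref{lem:aux}) is: $\theta(T_p)=\epsilon(p+1)$ puts $T_p-\epsilon(p+1)$ into $I=\ker\theta$, so the integer $m:=g(\epsilon(p+1))$ lies in $I$, where $g$ is the characteristic polynomial of $T_p$ on $S_2(\Gamma_0(N))$; the Ramanujan–Petersson bound $|a_p(f)|<2\sqrt{p}<p+1$ forces $m\neq 0$, so $(m)\subseteq I$ and $\TT_N/I$ is finite. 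Locality of $\TT_N/I$ then follows from locality of $R$ by lifting idempotents. Your argument omits the level raising condition entirely and is therefore incorrect as stated.

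Second, the step that the image lands in the $p$-new part is described too loosely. You invoke the character group of the toric part of the N\'eron model, Grothendieck's monodromy pairing, and an Eichler–Shimura relation $T_p=\Frob_p+p\Frob_p^{-1}$; but $J_0(Np)$ has bad reduction at $p$ and the paper does not take this route. The actual argument (following Ribet's Theorem~1 and Propositions~1--2 in \cite{Ribet90a}) embeds $V[I]$ into $J_0(N)\times J_0(N)$ by $x\mapsto(x,-\epsilon x)$, observes that the level raising condition places $V[I]$ inside $\Delta=\ker\fmat{1+p}{T_p}{T_p}{1+p}$, shows $\alpha|_{V[I]}$ is injective because $\ker\alpha=\Sigma$ is Eisenstein while $\Supp(V[I])=\{\fm\}$ is not, and finally shows that $\alpha(V[I])$ maps to zero in the Eisenstein module $\Delta/\Sigma^\perp$, whence $\alpha(V[I])\subseteq\Sigma^\perp/\Sigma\cong A\cap B$. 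Also, a minor point: Gorensteinness of $\TT_\fm$ gives $V[\ell^r]_\fm\cong(\TT_N/\ell^r\TT_N)_\fm^2$, and taking $\overline I$-kernels yields $V[I]\cong\Hom_\ZZ(\TT_N/I,\ZZ/\ell^r\ZZ)^2$, which is \emph{faithful} over $\TT_N/I$ but need not be \emph{free} of rank $2$ (the quotient $\TT_N/I$ need not be Gorenstein); faithfulness is what the proof actually uses.
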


In view of Lemma~\ref{lem:aux}, the following corollary is essentially just an equivalent reformulation.
Let $\cO$ be the ring of integers of a number field and $\la$ a prime in $\cO$ above $\ell$.

\begin{corollary}\label{cor:main}
Let $m\geq 1$ be an integer and $\rho : G_\QQ \to \GL_2(\cO/\lambda^m)$ be a continuous
(for the discrete topology on $\cO/\lambda^m$) Galois representation
that is modular, associated with a weak eigenform $\theta:\TT_2(N)\to\cO/\lambda^m$,
and such that the residual representation $\rhobar: G_\QQ \to \GL_2(\cO/\la)$
is absolutely irreducible.
If $\cO/\la$ is of characteristic~$2$, assume the {\em multiplicity one/Gorenstein condition} that $\rhobar$ is not unramified at~$2$ with scalar Frobenius.

Let $p$ be a prime which satisfies the {\em level raising condition} for~$\rho$, which means here that 
$$(\ell N,p)=1 \textnormal{ and }\Tr(\rho(\Frob_p)) \equiv \pm (p+1) \mod \la^m.$$
Then $\rho$ is also associated with a weak eigenform $\theta':\TT_2(Np)\to\cO/\lambda^m$ which is new at $p$,
i.e.~$\theta'$ factors through the $\TT_2^{p-\textrm{new}}(Np)$.
\end{corollary}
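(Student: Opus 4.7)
The plan is to derive Corollary~\ref{cor:main} as a direct specialisation of Theorem~\ref{thm:main} to the coefficient ring $R := \cO/\lambda^m$ equipped with the discrete topology, combined with a bookkeeping application of Lemma~\ref{lem:aux} to identify the two formulations of the level raising condition.

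First, I would verify the hypotheses of Theorem~\ref{thm:main} for this choice of~$R$. The ring $\cO/\lambda^m$ is Artinian and local with maximal ideal $\fm_R = \lambda\cO/\lambda^m\cO$ and residue field $R/\fm_R \cong \cO/\lambda$, which is finite; hence the two residual representations coincide, and both the absolute irreducibility hypothesis and the multiplicity one/Gorenstein condition in residue characteristic~$2$ transfer verbatim. Since $\theta : \TT_2(N) \to R$ is a weak eigenform and $\rhobar$ is absolutely irreducible, the attached Galois representation~$\rho$ provided by \cite[Theorem~3]{CKW} is unramified at every prime coprime to $N\ell$; the assumption $(\ell N,p)=1$ therefore guarantees that $\rho$ is unramified at~$p$, as required by the level raising condition of Theorem~\ref{thm:main}. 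The corollary's congruence $\Tr(\rho(\Frob_p)) \equiv \pm(p+1) \pmod{\lambda^m}$ is, tautologically, the equality $\Tr(\rho(\Frob_p)) = \pm(p+1)$ in~$R$.

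Applying Theorem~\ref{thm:main} then produces a weak eigenform $\theta' : \TT_2(Np) \to R$ factoring through $\TT_2^{p-\textrm{new}}(Np)$ and whose associated Galois representation is isomorphic to~$\rho$, which is precisely the assertion of Corollary~\ref{cor:main}. Lemma~\ref{lem:aux}, invoked as indicated in the excerpt, supplies the remaining link between the abstract finite-image setting of the theorem and the explicit $\cO/\lambda^m$-coefficient setting of the corollary, so no further argument is needed. Since the whole derivation is essentially a translation through definitions, no genuine obstacle is expected; the only point that deserves care is verifying that the Galois representation attached to a weak eigenform modulo~$\ell^m$ really is unramified at every prime $p \nmid N\ell$ under absolute irreducibility of~$\rhobar$, which is a standard consequence of the construction in~\cite{CKW}.
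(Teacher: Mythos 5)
Your proposal is correct and follows essentially the same path as the paper, which simply remarks that the corollary is ``in view of Lemma~\ref{lem:aux}\ldots essentially just an equivalent reformulation'' of Theorem~\ref{thm:main}. The one substantive step you identify --- that $(\ell N,p)=1$ together with the construction of the attached Galois representation in~\cite{CKW} yields unramifiedness of $\rho$ at~$p$, so that the corollary's level raising condition implies the theorem's --- is exactly the translation the paper has in mind. Note only that for the direction you actually need (Theorem~\ref{thm:main} $\Rightarrow$ Corollary~\ref{cor:main}), Lemma~\ref{lem:aux} is not strictly required: the image of $\theta$ in $\cO/\lambda^m$ is already finite local of prime-power characteristic by fiat, and the lemma is more pertinent to seeing that the general hypotheses of the theorem reduce to the concrete setting of the corollary, i.e.\ to the converse direction in the ``equivalent reformulation.''
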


We remark that for $m=1$ this is Theorem 1 of \cite{Ribet90a}.
Even if $\theta$ is a strong eigenform,
there is no guarantee that the weak eigenform of level new at $p$ that one obtains in the end is strong.

\begin{corollary}
Let $R$ be a local topological ring with maximal ideal $\fm_R$ and
let $\rho : G_\QQ \to \GL_2(R)$ be a continuous Galois representation that is modular, has finite image
and such that the residual representation $\rhobar: G_\QQ \to \GL_2(R/\fm)$ is absolutely irreducible.
If the characteristic of $R/\fm$ is~$2$, assume the {\em multiplicity one/Gorenstein condition} that $\rhobar$ is not unramified at~$2$ with scalar Frobenius.

Then there exists a positive set of primes $p$ (coprime to $N$) such that $\rho$ is modular of level $Np$ and new at~$p$.
\end{corollary}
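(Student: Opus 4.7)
The plan is to apply Theorem~\ref{thm:main} prime by prime to a suitable positive-density set of primes, which will be produced by a Chebotarev argument.

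First, since $\rho$ has finite image, it factors through $\Gal(K/\QQ)$ for some finite Galois extension $K/\QQ$; write $H = \rho(G_\QQ) \subseteq \GL_2(R)$. The set of matrix entries of elements of $H$ is finite, so the subring $S \subseteq R$ they generate is a finitely generated $\ZZ$-algebra. In the settings that arise in this paper---for example when $R$ is a $\ZZ/\ell^m\ZZ$-algebra, as in Corollary~\ref{cor:main}---the ring $R$ has positive characteristic, and then $S$ is a finite ring of some characteristic $n \ge 1$ (dividing the characteristic of $R$).

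Next, I would apply Chebotarev's density theorem to the compositum $L = K \cdot \QQ(\zeta_n)$, which is a finite Galois extension of $\QQ$. The set of rational primes $p$ that split completely in $L$ and are coprime to $N$ has positive density $1/[L:\QQ]$. For any such $p$: the representation $\rho$ is unramified at $p$ (because $p$ is unramified in $K$); $\rho(\Frob_p) = \rho(1) = I$, so $\Tr(\rho(\Frob_p)) = 2$ in $R$; and $p \equiv 1 \pmod{n}$, which gives $p+1 = 2$ in $R$. Combining these, $\Tr(\rho(\Frob_p)) = +(p+1)$ in $R$, verifying the level-raising condition of Theorem~\ref{thm:main}. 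Invoking that theorem for each such $p$ delivers a weak eigenform $\theta' : \TT_2(Np) \to R$ factoring through $\TT_2^{p\textrm{-new}}(Np)$ and associated to $\rho$, which is exactly the claim.

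The main obstacle I anticipate is justifying cleanly that $S$ is finite---equivalently, that $R$ has positive characteristic---so that the integer congruence $p \equiv 1 \pmod n$ actually translates into the $R$-valued equation $p+1 = 2$. Within the paper's conventions this is immediate, but in the fully general statement it deserves a short remark. An alternative that sidesteps this issue when the residue characteristic is odd is to use complex conjugation instead of the identity: since $\rhobar$ is odd, $\rho(c)$ is a non-scalar involution, hence $\Tr(\rho(c)) = 0$, and a Chebotarev condition forcing $\Frob_p$ to map to $c$ in $\Gal(K/\QQ)$ together with $p \equiv -1 \pmod n$ makes both sides of the trace equation vanish in $R$.
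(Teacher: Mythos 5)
Your proposal is correct, but you have chosen a variant route and introduced an unnecessary complication that the paper's own argument sidesteps. The paper uses complex conjugation: since $\rho$ is odd (attached to a weight-$2$ form on $\Gamma_0(N)$, so $\det\rho$ is the cyclotomic character), $\det(\rho(c))=-1$ and Cayley--Hamilton for the involution $\rho(c)$ gives $\bigl(1+\det\rho(c)\bigr)I=\bigl(\Tr\rho(c)\bigr)\rho(c)$, hence $\Tr(\rho(c))=0$. Chebotarev then supplies a positive-density set of primes $p$ with $\Frob_p$ conjugate to $c$ in the finite image; for such $p$ one reads off $p=\det(\rho(\Frob_p))=-1$ in $R$ and $\Tr(\rho(\Frob_p))=0=-(p+1)$, and Theorem~\ref{thm:main} applies. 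You instead force $\Frob_p$ to be trivial. That is equally legitimate, but the detour through $\QQ(\zeta_n)$ and the worry about the characteristic of $S$ is superfluous: the relation $\det(\rho(\Frob_p))=p$ in $R$ (i.e.\ $p\cdot 1_R$), which both approaches must use, already forces $p\cdot1_R=\det(I)=1_R$ when $\rho(\Frob_p)=I$, so $p+1=2$ in $R$ with no integer congruence needed. This also shows \emph{a posteriori} that $R$ has positive characteristic (two distinct split primes $p\ne p'$ give $(p-p')\cdot 1_R=0$), so the finiteness of $S$ that worried you is automatic. In short: your main argument is sound once you notice the determinant relation closes the gap for free, your ``alternative'' at the end is essentially the paper's proof, and the $\QQ(\zeta_n)$ scaffolding can be dropped in both versions. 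What the paper's choice of $c$ buys over your choice of the identity is a slightly more symmetric and classical formulation (it is exactly Ribet's original density argument); what your choice buys is that you never need to know $\Tr(\rho(c))=0$.
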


\begin{proof}
This is proved as in~\cite{Ribet90a}.
The argument is that complex conjugation, as an involution, has trace~$0$ and determinant~$-1$.
By Chebotarev's density theorem, there is a positive density set of primes $p$ such that $-1=\det(\rho(\Frob_p)) = p $
and $p+1 = 0 = \Tr(\rho(\Frob_p))$ in~$R$.
\qed\end{proof}

\subsubsection*{Jacobians of modular curves}

In what follows we set $\TT_N := \TT_2(\Gamma_0(N))$ and $\TT_{Np} := \TT_2(\Gamma_0(Np))$.
The approach taken here is adapted from Ribet's original one, i.e.\ it is based on the geometry of modular curves and their Jacobians.
In this section we gather the necessary results from \cite{Ribet90a} that we need for the proof of the main result.
Let $N$ be a positive integer. Let $X_0(N)$ be the modular curve of level $N$ and $J_0(N):=\operatorname{Pic}^0(X_0(N))$ its Jacobian.
There is a well defined action of  the Hecke operators $T_n$ on $X_0(N)$ and hence, by functoriality, on $J_0(N)$, too.
The dual of $J_0(N)$ carries an action of the Hecke algebra as well and can be identified with $S_2(\Gamma_0(N))$.
This implies that one has a faithful action of $\TT_N$ on $J_0(N)$. 

Let now $p$ be a prime not dividing~$N$.
In the same way one has an action of Hecke operators on $X_0(Np)$ and its Jacobian $J_0(Np)$ and the latter admits a faithful action of $\TT_{Np}$.
The moduli interpretation of $X_0(N)$ and $X_0(Np)$ allows us to define the two natural degeneracy maps
$\delta_1, \delta_p:X_0(Np)\to X_0(N)$
and their pullbacks $\delta_1^*, \delta_p^*:J_0(N)\to J_0(Np)$.
The image of the map
\begin{equation*}
\alpha:J_0(N)\times J_0(N)\to J_0(Np),\textrm{\ \ \ \ \ \ \ }(x,y)\mapsto \delta_1^*(x)+\delta_p^*(y).
\end{equation*}
is by definition the {\em $p$-old subvariety} of $J_0(Np)$.
We will denote it by~$A$. The map $\alpha$ is \emph{almost} Hecke-equivariant:
\begin{equation}
\alpha\circ T_q = T_q\circ\alpha\textrm{ for every prime }q\neq p , \label{eq:1}
\end{equation}
\begin{equation}
\alpha\circ\fmat{T_p}{p}{-1}{0} = U_p\circ\alpha. \label{eq:2}
\end{equation}
For the first equation to make sense one interprets the operator $T_q$ on the left hand side of equation~\eqref{eq:1}
as acting diagonally on $J_0(N)\times J_0(N)$.
We also work under the notational convention $T_q = U_q$ for primes $q \mid N$, but we write $U_p$ in level $Np$.
Consider also the kernel $\Sh$ of the map $J_0(N) \to J_1(N)$ induced by $X_1(N)\to X_0(N)$.
If we inject it into $J_0(N)\times J_0(N)$ via $x \mapsto (x, -x)$ then its image, which we will denote by $\Sigma$,
is the kernel of~$\alpha$ (see Proposition~1 in \cite{Ribet90a}).

Let $\Delta$ be the kernel of $\fmat{1+p}{T_p}{T_p}{1+p}\in M^{2\times2}(\TT_N)$ acting on $J_0(N)\times J_0(N)$.
The group $\Delta$ is finite and comes equipped with a perfect $\mathbb{G}_m$-valued skew-symmetric pairing.
Furthermore $\Sigma$ is a subgroup of $\Delta$, self orthogonal, and $\Sigma \subseteq \Sigma^\perp \subseteq \Delta$.
One can also see $\Delta/\Sigma$, and therefore its subgroup $\Sigma^\perp/\Sigma$, as a subgroup of~$A$.

Let $B$ be the $p$-new subvariety of $J_0(Np)$. It is a complement of $A$, i.e.\ $A+B = J_0(Np)$ and $A\cap B$ is finite.
The Hecke algebra acts faithfully on~$B$ through its $p$-new quotient and it turns out (see Theorem 2 in \cite{Ribet90a}) that
\begin{equation}\label{iso:1}
A\cap B \isom \Sigma^\perp/\Sigma.
\end{equation}
as groups.

Furthermore $\Sh$, and therefore $\Sigma$ and its Cartier dual $\Delta/\Sigma^\perp$, are annihilated by the operators $\eta_r = T_r - (r+1)\in \TT_N$
for all primes $r\nmid Np$ (see Proposition 2 in \cite{Ribet90a}).
In this context, we recall that a maximal ideal $\fm$ of the Hecke algebra $\TT_N$ is called {\em Eisenstein} if
$T_r \mod \fm$ equals the Frobenius traces of a two-dimensional reducible Galois representation at almost all primes~$r$.
This is in particular the case if $\fm$ contains the operator $T_r - (r+1)$ for almost all primes~$r$.
Consequently, any maximal ideal in the support of the Hecke modules $\Sigma$ and $\Delta/\Sigma^\perp$ is Eisenstein.

\subsubsection*{Proof of Theorem \ref{thm:main}}

We assume the setting of Theorem~\ref{thm:main}.
In particular, we assume that $\rho$ satisfies the level raising condition at a prime~$p \nmid N$, i.e.\
there is $\epsilon \in \{\pm 1\}$ such that $\theta(T_p)=\Tr(\rho(\Frob_p)) = \epsilon(p+1)$.
Let $\bar{\theta}:\TT_N\to R/\fm_R$ be its reduction modulo $\fm_R$
(which is associated with $\rhobar$, the modulo $\fm_R$ reduction of $\rho$),
and let $I$ and $\fm$ be the kernels of $\theta$ and $\bar{\theta}$, respectively.
It will be enough to find a weak eigenform $\theta':\TT_{Np}\to R$ (i.e.\ a ring homomorphism) that agrees with $\theta$ on $T_q$
for all primes $q\neq p$ and factors through $\TT_{Np}^{p-\textrm{new}}$ (hence, new at $p$).

\begin{lemma}\label{lem:aux}
The ideal $\fm$ is the only maximal ideal of $\TT_N$ containing~$I$.
Moreover, $\TT_N/I$ is a finite subring of~$R$ of positive characteristic a prime power~$\ell^r$.
\end{lemma}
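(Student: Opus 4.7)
The plan is to leverage two structural facts: $\TT_N = \TT_2(\Gamma_0(N))$ is free of finite rank as a $\ZZ$-module (it acts faithfully on the finite-dimensional $\CC$-vector space $S_2(\Gamma_0(N))$), so the image $\TT_N/I = \theta(\TT_N) \subseteq R$ is a finitely generated subring of $R$; and $R$ is local.

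First I would establish that $\TT_N/I$ is finite. In the concrete settings of the paper, where $R = \cO/\lambda^m$ or more generally $R$ is a finitely generated subring of $\Zmod{\ell^m}$, this is immediate from the fact recorded in the Introduction that any finitely generated subring of $\Zmod{\ell^m}$ is contained in some $\cO/\lambda^{e(m-1)+1}$, which is itself finite. For an abstract local topological $R$, this step is the main obstacle: one would have to use the continuity of $\rho$ and the compactness of $G_\QQ$, together with a suitable topology on $R$, to bound the trace-generated subring and rule out a characteristic-zero $\ZZ$-order.

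Next I would use an idempotent argument to show that $\TT_N/I$ is local. Because $R$ is local, its only idempotents are $0$ and $1$; since $\TT_N/I \hookrightarrow R$ is injective, $\TT_N/I$ inherits the property of having only trivial idempotents. Combined with $\TT_N/I$ being Artinian (finite commutative), the structure theorem forces $\TT_N/I$ to be local: any such ring decomposes as a product $\prod_i A_i$ of local Artinian rings, and the projections yield nontrivial orthogonal idempotents unless there is a single factor. The unique maximal ideal of $\TT_N/I$ is then forced to be $\fm/I = (\TT_N/I) \cap \fm_R$, the pull-back of $\fm_R$. Translated back, $\fm$ is the only maximal ideal of $\TT_N$ containing $I$.

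Finally, a finite local commutative ring has characteristic a prime power: if the characteristic $n$ of $\TT_N/I$ had two distinct prime divisors, the Chinese Remainder Theorem applied to $\ZZ/n\ZZ \hookrightarrow \TT_N/I$ would provide nontrivial orthogonal idempotents, contradicting locality. Hence the characteristic equals $\ell^r$ for a prime $\ell$ (necessarily the residue characteristic of~$R$) and some $r \geq 1$. The idempotent argument is the conceptual heart of the proof, while finiteness is the only delicate point that calls on the topological hypotheses in an essential way.
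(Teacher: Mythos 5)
Your idempotent argument for locality and the prime-power-characteristic step are correct and match the paper exactly. The gap is in the finiteness of $\TT_N/I$, which you yourself flag as "the main obstacle" and then speculate could be resolved via continuity of $\rho$ and compactness of $G_\QQ$. That is not how the paper does it, and that route would be difficult to make work: $R$ is only assumed to be a local topological ring, with no compactness or completeness hypotheses, and indeed Theorem~\ref{thm:main} \emph{concludes} that the image of $\theta$ is finite and that $R/\fm$ is a finite field --- these are not inputs you may assume. Your appeal to the concrete case $R = \cO/\lambda^m$ covers Corollary~\ref{cor:main} but not the theorem in its stated generality.

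What you are missing is that finiteness comes from the \emph{level raising condition}, which you have not used at all. The paper's argument: embed $\TT_N$ into $M^{d\times d}(\ZZ)$ via its faithful action on $S_2(N)$, and let $g(X) \in \ZZ[X]$ be the characteristic polynomial of $T_p$. The hypothesis $\theta(T_p) = \epsilon(p+1)$ gives $T_p - \epsilon(p+1) \in I$, hence the integer $m := g(\epsilon(p+1))$ lies in $I$. Since $p \nmid N$, the Ramanujan--Petersson bound guarantees that every eigenvalue $a_p(f)$ of $T_p$ satisfies $|a_p(f)| \le 2\sqrt{p} < p+1$, so no eigenvalue equals $\epsilon(p+1)$ and therefore $m \neq 0$. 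Thus $(m) \subseteq I$ with $m$ a nonzero integer, which forces $\TT_N/I$ to be a quotient of the finite ring $\TT_N/(m)$ (as $\TT_N$ is a finite free $\ZZ$-module). This is an arithmetic argument, not a topological one, and it is precisely what the level raising hypothesis is there to deliver at this stage of the proof.

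Incidentally, your remark that $\TT_N/I$ is "a finitely generated subring of $R$" does not by itself imply finiteness (compare $\ZZ[x]$), so even the phrasing of your preliminary observation needs the nonzero integer in $I$ to get off the ground. Once finiteness is in hand, the rest of your proposal goes through word for word.
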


\begin{proof}
Since $\TT_N$ is a $\ZZ$-Hecke algebra acting faithfully on $S_2(N)$ we have that $\TT_N$ injects into $M^{d\times d}(\ZZ)$,
where $d$ is the dimension of $S_2(N)$. We can therefore see every operator in $\TT_N$  as an integral matrix of dimension~$d$.
We recall that the eigenvalues of the operator $T_n$ will correspond to the coefficients $a_n(f)$
when $f$ runs through the normalised eigenforms in $S_2(N)$.

Let $g(X)\in \ZZ[X]$ be the characteristic polynomial of $T_p$. The hypothesis $\theta(T_p)=\epsilon(p+1)$ implies that $T_p-\epsilon(p+1)\in I$
and therefore $m:=g(\epsilon(p+1))\in I$.
Since $p\nmid N$, the Ramanujan-Petersson bounds guarantee that none of the eigenvalues of $T_p$ is equal to $\epsilon(p+1)$ and therefore $m$ is non-zero.
We thus have that $(m)\subseteq I$. This makes the quotient $\TT/I$ finite.

Since $\TT/I$ is Artinian, it can be written as a direct product of Artinian local rings indexed by its finitely many maximal ideals.
Assume it decomposes as a direct product of $s$ local rings, with $s\geq 1$.
The set containing the identity $e_i$ of each component then forms a complete set (i.e.\ $\sum_{i=1}^se_i=1$) of pairwise orthogonal
(i.e.\ $e_ie_j=0$ for $1\leq i\neq j\leq s$) non-trivial (i.e.\ $e_i\neq0,1$) idempotents for $\TT_N/I$.
The set $\{\bar{e}_1,\ldots, \bar{e}_s\}$ of their image through the injection of $\TT_N/I$ into $R$ is clearly
a complete set of pairwise orthogonal non-trivial idempotents, too.
This implies that $R$ is isomorphic to $\prod_{i=1}^s\bar{e}_iR$.
But this cannot happen unless $s=1$ since $R$ is local.
Since $s=1$ we get that $\TT_N/I$ is local as well. The claims are then immediate.
\qed\end{proof}

By the previous lemma, we have inclusions $(\ell^r) \subseteq I \subseteq \fm$ with some prime power $\ell^r > 1$, giving rise to inclusions
$$ V[\ell^r] := J_0(N)(\Qbar)[\ell^r] \supseteq V[I] := J_0(N)(\Qbar)[I]\supseteq V[\fm] := J_0(N)(\Qbar)[\fm].$$

\begin{lemma}\label{lem:VI}
The support of $V[I]$ is the singleton~$\fm$ and is hence non-Eisenstein.
\end{lemma}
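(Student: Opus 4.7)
The plan is to establish the two halves of the support claim separately: first that $\Supp V[I] \subseteq \{\fm\}$, and then that $V[I]$ is non-zero (which, combined with the first half, forces the support to equal $\{\fm\}$). The non-Eisenstein assertion will follow immediately from the identification of $\fm$ as the kernel of $\bar\theta$.

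For the first inclusion, I would invoke Lemma~\ref{lem:aux} directly: since $\fm$ is the unique maximal ideal of $\TT_N$ containing $I$, for any other maximal ideal $\fn$ there exists $t\in I$ with $t\notin\fn$. Such $t$ annihilates $V[I]$ by construction, yet becomes a unit in $(\TT_N)_\fn$, so $V[I]_\fn = 0$. Equivalently, $V[I]$ is naturally a module over the local Artinian ring $\TT_N/I$ (whose unique maximal ideal is $\fm/I$ by Lemma~\ref{lem:aux}), and so its support, viewed over $\TT_N$, is automatically contained in $\{\fm\}$.

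For non-vanishing it suffices to exhibit a non-zero element of $V[\fm]$, since $V[\fm]\subseteq V[I]$ (because $I\subseteq\fm$) and any non-zero $\TT_N/\fm$-vector space is already $\fm$-local. Here I would appeal to the standard theory underlying Ribet's original argument: the mod-$\ell$ system of Hecke eigenvalues $\bar\theta:\TT_N\to\TT_N/\fm$ is a weight-$2$, level-$\Gamma_0(N)$ eigenform, which by the Deligne--Serre lifting lemma comes from a characteristic-zero cuspidal eigenform of the same weight and level. Via the Eichler--Shimura identification of $S_2(\Gamma_0(N))$ with the cotangent space of $J_0(N)$, the Hecke system $\bar\theta$ is realised on $J_0(N)(\Qbar)[\fm]$, and the multiplicity one / Gorenstein property (automatic for $\ell>2$ under residual absolute irreducibility, and hypothesised in Theorem~\ref{thm:main} when $\ell=2$) guarantees that $J_0(N)(\Qbar)[\fm]$ is in fact $2$-dimensional over $\TT_N/\fm$, and in particular non-zero.

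Finally, $\fm=\ker(\bar\theta)$ and the associated Galois representation attached to this system of eigenvalues is precisely $\rhobar$, which is absolutely irreducible by the hypothesis of Theorem~\ref{thm:main}. Hence $\fm$ is not Eisenstein in the sense recalled before the statement of the lemma. The main obstacle throughout is the non-vanishing of $V[\fm]$, which is the pivotal input; this is precisely why the Gorenstein/multiplicity-one condition has been imposed, and once it is granted the rest of the lemma is a formal commutative-algebra consequence of Lemma~\ref{lem:aux}.
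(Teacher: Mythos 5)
Your proof is correct and follows essentially the same route as the paper: $V[\fm]\subseteq V[I]$ gives $\fm\in\Supp V[I]$, Lemma~\ref{lem:aux} pins the support down to the singleton, and irreducibility of $\rhobar$ gives non-Eisenstein. You have, however, over-attributed the role of the Gorenstein/multiplicity-one hypothesis. The non-vanishing of $V[\fm]=J_0(N)(\Qbar)[\fm]$ does not need multiplicity one at all: it already follows from the fact that $\TT_N\otimes\ZZ_\ell$ acts faithfully on the $\ell$-adic Tate module of $J_0(N)$, so every maximal ideal of residue characteristic $\ell$ lies in the support, and hence $J_0(N)(\Qbar)[\fm]\neq 0$. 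Multiplicity one pins the $\TT_N/\fm$-dimension down to exactly $2$, which is irrelevant here but crucial later: the Gorenstein condition in Theorem~\ref{thm:main} is imposed for Proposition~\ref{prop:VI-ff} (faithfulness of $V[I]$ as a $\TT_N/I$-module), not for this lemma. So your closing sentence identifying this lemma as the reason for the Gorenstein hypothesis is a misattribution, even though the argument you give is logically sound.
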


\begin{proof}
As $V[I] \supseteq V[\fm]$, the maximal ideal $\fm$ is in the support of $V[I]$.
Since the representation $\rhobar$ is irreducible we get that $\fm$ is non-Eisenstein
(see for example Theorem 5.2c in~\cite{Ribet90b}).
Finally, Lemma \ref{lem:aux} implies that $\Supp(V[I])$ is the singleton~$\{\fm\}$.
\qed\end{proof}

\begin{lemma}\label{lem:VI-alpha}
The restriction of~$\alpha$ to $V[I]$ is injective and its image $\alpha(V[I])$ is stable under the action of~$\TT_{Np}$.
In particular, $U_p$ acts on $\alpha(V[I])$ by multiplication by~$\epsilon$.
\end{lemma}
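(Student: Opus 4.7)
The plan is first to pin down the embedding $\iota: V[I] \to J_0(N)\times J_0(N)$ against which we restrict $\alpha$; taking $\iota(x) = (x, -\epsilon x)$ is motivated by---and indeed forces---the prescribed $U_p$-eigenvalue. Concretely, since $T_p - \epsilon(p+1) \in I$, the operator $T_p$ acts on $V[I]$ as multiplication by $\epsilon(p+1)$, and relation~\eqref{eq:2} gives
$$U_p\,\alpha(x,-\epsilon x) = \alpha(T_p x - \epsilon p x,\, -x) = \alpha\bigl(\epsilon(p+1)x - \epsilon p x,\, -x\bigr) = \alpha(\epsilon x, -x) = \epsilon\,\alpha(x, -\epsilon x),$$
which simultaneously establishes that $U_p$ acts on $\alpha(V[I])$ as multiplication by~$\epsilon$.

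For stability under the remaining generators of $\TT_{Np}$, I would invoke relation~\eqref{eq:1}: for every Hecke operator $T_q$ with $q \neq p$ (including $U_q$ for $q\mid N$) one has $T_q\circ\alpha = \alpha\circ T_q$ with $T_q$ acting diagonally on $J_0(N)\times J_0(N)$. Since $V[I]$ is a $\TT_N$-module it is preserved by $T_q$, so $\iota(V[I])$ is preserved by the diagonal $T_q$, and therefore $\alpha(V[I])$ is preserved by~$T_q$. Together with the $U_p$ computation above, this gives $\TT_{Np}$-stability.

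For injectivity, if $\alpha(x,-\epsilon x) = 0$ for some $x\in V[I]$, then $(x,-\epsilon x)$ lies in $\ker(\alpha) = \Sigma = \{(y,-y):y\in\Sh\}$, which already forces $x\in V[I]\cap \Sh$. Hence it suffices to prove $V[I]\cap\Sh = 0$. This is the main obstacle and the only point at which the non-Eisenstein hypothesis is used.

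To dispose of this, I would exploit that $\Sh$---and hence the subgroup $V[I]\cap\Sh$---is annihilated by $\eta_r = T_r - (r+1)$ for every prime $r\nmid Np$. By Lemma~\ref{lem:VI}, the only maximal ideal of $\TT_N$ in the support of $V[I]$ is $\fm$, and $\fm$ is non-Eisenstein; consequently there exists a prime $r\nmid Np$ with $\eta_r \notin \fm$. By Lemma~\ref{lem:aux}, $\TT_N/I$ is local with maximal ideal $\fm/I$, so $\eta_r$ becomes a unit in $\TT_N/I$ and therefore acts invertibly on the $\TT_N/I$-module $V[I]$. Since $\eta_r$ also annihilates $V[I]\cap\Sh$, the intersection must vanish, completing the argument.
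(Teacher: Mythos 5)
Your proof is correct and follows essentially the same route as the paper: the embedding $x \mapsto (x,-\epsilon x)$, Equation~\eqref{eq:1} for stability under $T_q$ ($q\neq p$), Equation~\eqref{eq:2} for the $U_p$-eigenvalue, and the Eisenstein/non-Eisenstein dichotomy for injectivity. The only cosmetic difference is that the paper disposes of injectivity by observing that $\Supp(V[I])=\{\fm\}$ is non-Eisenstein while $\Supp(\Sigma)$ is Eisenstein, so the supports (hence the modules) intersect trivially; you unpack this by exhibiting a concrete $\eta_r\notin\fm$ that is a unit in the local ring $\TT_N/I$ yet annihilates $V[I]\cap\Sh$, which is the same argument made explicit.
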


\begin{proof}
Consider the image of $V[I]$ (still denoted $V[I]$) under the $\TT_N$-equivariant embedding
$$J_0(N)\xrightarrow{x \mapsto (x,-\epsilon x)} J_0(N)\times J_0(N).$$
Next recall that the kernel $\Sigma$ of $J_0(N)\times J_0(N) \xrightarrow{\al} A\subseteq J_0(Np)$
is annihilated by almost all operators $T_r - (r+1)$ with $r$ prime. The fact that the support of $V[I]$ is non-Eisenstein
from Lemma~\ref{lem:VI} shows that the intersection of $\Sigma$ and $V[I]$ is trivial, proving the injectivity of $\alpha|_{V[I]}$.

As $\alpha$ commutes with the action of the Hecke operators $T_n$ with $n$ coprime to~$p$ (see Equation~\eqref{eq:1}),
it follows that $\alpha(V[I])$ is stable under those operators.
Here the level raising condition enters for proving the stability under $U_p$, as follows by using Equation~\eqref{eq:2} for
$y \in V[I]$:
\begin{align*}
U_p(y) & = U_p(\al(x,-\epsilon x))
         = \al(\fmat{T_p}{p}{-1}{0} \fvect{x}{-\epsilon x})
         = \al(T_p(x) - \epsilon px, -x)\\
       & = \al(\epsilon (p+1) x - \epsilon px, -x) = \al(\epsilon x,-x) = \epsilon \al(x,-\epsilon x) = \epsilon y.
\end{align*}
The final claim follows as well.
\qed\end{proof}

The following proposition is a non-trivial input.

\begin{proposition}\label{prop:VI-ff}
The $\TT_N/I$-module $V[I]$ is faithful.
\end{proposition}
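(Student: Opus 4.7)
The plan is to localize at the unique maximal ideal $\fm \supseteq I$, appeal to Mazur's multiplicity one to identify $V[\ell^r]_\fm$ as a free rank-two module over a Gorenstein Artinian local ring, and then deduce faithfulness from the Gorenstein double-annihilator identity.

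First, by Lemma~\ref{lem:VI} the support of $V[I]$ is the singleton $\{\fm\}$, so $V[I] = V[I]_\fm$; by Lemma~\ref{lem:aux}, $I$ contains some prime power~$\ell^r$, so the whole $\TT_N$-action on $V[I]$ factors through the finite local Artinian ring
$$T := (\TT_N/\ell^r)_\fm = \TT_{N,\fm}/\ell^r\TT_{N,\fm},$$
whose residue field is $\FF := \TT_N/\fm$.

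Next, Lemma~\ref{lem:VI} also says $\fm$ is non-Eisenstein. Combined with the multiplicity one/Gorenstein hypothesis (automatic for odd~$\ell$ under residual absolute irreducibility, and imposed by hand for $\ell=2$), Mazur's theorem yields that $\TT_{N,\fm}$ is Gorenstein and that the $\fm$-localized $\ell$-adic Tate module $T_\ell J_0(N)_\fm$ is free of rank two over $(\TT_N \otimes \ZZ_\ell)_\fm$. Reducing modulo~$\ell^r$ gives that
$$M := V[\ell^r]_\fm = J_0(N)(\Qbar)[\ell^r]_\fm$$
is free of rank two over~$T$. Since $\ell^r$ is a non-zerodivisor in $\TT_{N,\fm}$, the quotient $T$ is itself Gorenstein Artinian local.

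Finally, write $\overline{I} := I/(\ell^r) \subseteq T$. Then $V[I]_\fm = M[\overline{I}] \cong T[\overline{I}]^{\,2}$, whose $T$-annihilator equals $\operatorname{Ann}_T\bigl(\operatorname{Ann}_T(\overline{I})\bigr)$. The double-annihilator identity for Artinian local Gorenstein rings gives $\operatorname{Ann}_T\bigl(\operatorname{Ann}_T(\overline{I})\bigr) = \overline{I}$, so the annihilator of $V[I]_\fm$ in $T/\overline{I} = \TT_N/I$ is trivial: $V[I]$ is faithful as a $\TT_N/I$-module. The main obstacle is the appeal to multiplicity one in the second step: it is precisely what forces the Gorenstein hypothesis at characteristic~$2$ in Theorem~\ref{thm:main}. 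Once this is in hand, the rest is standard commutative algebra.
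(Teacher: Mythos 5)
Your proof is correct and follows essentially the same route as the paper's: both use the non-Eisenstein hypothesis plus multiplicity one/Gorenstein (Edixhoven's Theorem~9.2 and the argument on p.~333 of Tilouine, in the paper's references) to conclude that $V[\ell^r]_\fm$ is free of rank two over the Artinian local Gorenstein ring $T = (\TT_N/\ell^r\TT_N)_\fm$, and then deduce faithfulness of $V[I] = T[\overline{I}]^2$ over $\TT_N/I = T/\overline{I}$ from Gorensteinness. The only difference is in the final commutative-algebra step: you invoke the double-annihilator identity $\operatorname{Ann}_T\bigl(\operatorname{Ann}_T(\overline{I})\bigr) = \overline{I}$, whereas the paper equivalently rewrites $T \cong \Hom_{\ZZ}(T,\ZZ/\ell^r\ZZ)$ and takes $\overline{I}$-kernels to get $\Hom_{\ZZ}(\TT_N/I,\ZZ/\ell^r\ZZ)^2$, whose faithfulness over $\TT_N/I$ is Pontryagin duality; these are two standard and equivalent formulations of zero-dimensional Gorensteinness.
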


\begin{proof}
Due to the assumptions, Theorem~9.2 of~\cite{Edixhoven1992} implies that $V[\fm]$ is of dimension~$2$ as $\TT_N/\fm$-module.
By Nakayama's Lemma, it follows that the localisation at~$\fm$ of the $\ell$-adic Tate module is free of rank~$2$ as $(\TT_N \otimes_\ZZ \ZZ_\ell)_\fm$-module and that
$\Hom_{\ZZ_\ell}((\TT_N \otimes_\ZZ \ZZ_\ell)_\fm,\ZZ_\ell)$ is free of rank~$1$ as $(\TT_N \otimes_\ZZ \ZZ_\ell)_\fm$-module, precisely
as on p.~333 of~\cite{Tilouine}. Consequently,
$$V[\ell^r]_\fm \cong (\TT_N/\ell^r \TT_N)_\fm^2 \cong \Hom_{\ZZ_\ell}((\TT_N/\ell^r \TT_N)_\fm,\ZZ/\ell^r\ZZ)^2,$$
which implies by taking the $\overline{I}$-kernel with $\overline{I}$ the ideal such that $\TT_N/I \cong (\TT_N/\ell^r\TT_N)/\overline{I}$ that
$$V[I] \cong \Hom_\ZZ((\TT_N/\ell^r\TT_N)/\overline{I}, \ZZ/\ell^r\ZZ)^2 = \Hom_\ZZ(\TT_N/I, \ZZ/\ell^r\ZZ),$$
showing that $V[I]$ is faithful as $\TT_N/I$-module.
\qed\end{proof}

The authors do not know if the `multiplicity one' or `Gorenstein' condition is necessary. In the remaining case, the $2^r$-torsion group scheme
is ordinary, and hence by arguments as in Corollary~2.3 of \cite{multone} admits a nice decomposition as
$$ 0 \to (\TT_N/\ell^r \TT_N)_\fm \to V[\ell^r]_\fm \to \Hom_\ZZ((\TT_N/\ell^r\TT_N)_\fm, \ZZ/\ell^r\ZZ) \to 0. $$
However, we do not know if this sequence remains exact after taking the $\overline{I}$-kernel.
If this were the case, the additional assumption would be unnecessary.

\begin{lemma}
The action of $\TT_{Np}$ on $\alpha(V[I])$ is given by a ring homomorphism $\theta':\TT_{Np}\to R$ satisfying
$\theta'(T_q) = \theta(T_q)$ for all primes $q \neq p$ and $\theta'(U_p) = \epsilon$.
In particular, $\theta$ and $\theta'$ give rise to isomorphic Galois representations.
\end{lemma}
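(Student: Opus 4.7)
The plan is to exploit the faithfulness of $V[I]$ as a $\TT_N/I$-module (Proposition~\ref{prop:VI-ff}) to read the geometric $\TT_{Np}$-action on $\alpha(V[I])$ as a ring homomorphism into~$R$, and then to compare Galois representations via Chebotarev.

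First I would use the injectivity of $\alpha|_{V[I]}$ from Lemma~\ref{lem:VI-alpha} to identify $\alpha(V[I])$ with $V[I]$ as abelian groups. Under this identification, for each prime $q\neq p$, equation~\eqref{eq:1} combined with the twisted diagonal embedding $x\mapsto(x,-\epsilon x)$ forces $T_q\in\TT_{Np}$ to act on $\alpha(V[I])$ by exactly the endomorphism through which $T_q\in\TT_N$ acts on $V[I]$, namely the action of the element $\theta(T_q)\in\TT_N/I$. Lemma~\ref{lem:VI-alpha} further records that $U_p$ acts as multiplication by $\epsilon\in\ZZ$.

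Second, Proposition~\ref{prop:VI-ff} says that $V[I]$ is a faithful $\TT_N/I$-module, so $\TT_N/I$ embeds in $\End_\ZZ(V[I])$. The previous paragraph shows that the image of the action map $\TT_{Np}\to\End_\ZZ(\alpha(V[I]))\cong\End_\ZZ(V[I])$ is contained in this copy of $\TT_N/I$, because its generators $T_q$ (for $q\neq p$) and $U_p$ all land there. Composing with the embedding $\TT_N/I\hookrightarrow R$ produced by~$\theta$ then yields the desired ring homomorphism $\theta':\TT_{Np}\to R$, manifestly satisfying $\theta'(T_q)=\theta(T_q)$ for $q\neq p$ and $\theta'(U_p)=\epsilon$.

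For the final assertion, the Galois representation $\rho_{\theta'}$ associated with $\theta'$ (supplied, after restricting to a finite subring of~$R$ of the form $\Zmod{\ell^s}$, by \cite[Theorem~3]{CKW}) and $\rho$ are both unramified outside $Np\ell$, share the cyclotomic character as determinant (weight~$2$ with trivial nebentypus), and have equal Frobenius traces at every prime $q\nmid Np\ell$ by construction. Chebotarev then shows that the two associated two-dimensional pseudo-representations coincide, and since $\rhobar$ is absolutely irreducible the pseudo-representation pins down the representation up to isomorphism, yielding $\rho\cong\rho_{\theta'}$. The single non-trivial input is Proposition~\ref{prop:VI-ff}: without faithfulness the $\TT_{Np}$-action would only factor through an a priori smaller quotient with no obvious embedding back into~$R$, which is why the multiplicity one / Gorenstein hypothesis (and the associated $\ell=2$ restriction in Theorem~\ref{thm:main}) is essential here.
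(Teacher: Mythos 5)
Your proof is correct and takes essentially the same route as the paper: use the faithfulness from Proposition~\ref{prop:VI-ff} to identify $\TT_N/I$ simultaneously as a subring of $\End(V[I])$ and of $R$, observe via Lemma~\ref{lem:VI-alpha} and Equation~\eqref{eq:1} that the $\TT_{Np}$-action on $\alpha(V[I])\cong V[I]$ lands in that subring, and compose. You additionally spell out the Chebotarev/pseudo-representation argument for the final ``isomorphic Galois representations'' assertion, which the paper treats as immediate from the equality of traces.
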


\begin{proof}
The faithfulness of $V[I]$ as $\TT/I$-module from Proposition~\ref{prop:VI-ff} implies that $\theta$ factors through a subring~$S$ of $\End(V[I])$,
which is also a subring of~$R$.
By Lemma~\ref{lem:VI-alpha} and Equation~\eqref{eq:1},
the action of $\TT_{Np}$ on $\alpha(V[I])$ is also given by elements of~$S$, leading to a ring homomorphism $\theta':\TT_{Np} \to S \subseteq R$.
\qed\end{proof}

To finish the proof of Theorem~\ref{thm:main},
it remains to show that $\theta'$ factors through the $p$-new quotient of~$\TT_{Np}$.
To this end, it is enough to show that $\alpha(V[I])$ is a subgroup of $A\cap B$.
We again proceed according to Ribet.
By the level raising condition, $V[I]$, when considered as a subgroup of $J_0(N)\times J_0(N)$, is a subgroup of $\Delta$,
whence $\alpha(V[I]) \subseteq \Delta/\Sigma$.
As $\Delta/\Sigma^\perp$ is Eisenstein but $\alpha(V[I])$ is not, $\alpha(V[I])/\Sigma^\perp = 0$.
This implies $\alpha(V[I]) \subseteq \Sigma^\perp/\Sigma = A \cap B$, completing the proof of Theorem~\ref{thm:main}.

\section{Level lowering}\label{sec:lowering}

In this section we give an overview of results about level lowering modulo prime powers.
We start by the following simple observation: twisting an eigenform $f$ by a Dirichlet character~$\chi$ such that $\chi \equiv 1 \mod \lambda^m$
leads to an eigenform $g = f \otimes \chi$, which is congruent to~$f$ modulo~$\lambda^m$.
This idea leads to the following two level lowering results from the first author's unpublished PhD thesis~\cite{panosPhD}.

\begin{proposition}[Split ramified case]
Let $f \in S_k(\Gamma_1(M))$ be a newform such that the restriction to a decomposition group at~$p \neq \ell$
of the $\ell$-adic Galois representation attached to~$f$ is isomorphic to $\chi_1 \oplus \chi_2$, where both characters ramify.
Let $\lambda$ be a prime ideal of a number field containing the coefficients of~$f$.

If $\chi_1$ is unramified modulo~$\lambda^m$, then there exists a normalised eigenform $g \in S_k(\Gamma_1(M/p))$ such that $f \equiv g \mod \lambda^m$.
\end{proposition}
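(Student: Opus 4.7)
The plan is to construct $g$ as a twist $f \otimes \eta$ for a carefully chosen Dirichlet character $\eta$, exploiting the observation recalled just before the statement: if $\eta \equiv 1 \pmod{\lambda^m}$ as a Dirichlet character, then $g$ has Hecke eigenvalues $a_n(g) = \eta(n) a_n(f) \equiv a_n(f) \pmod{\lambda^m}$, so $f \equiv g \pmod{\lambda^m}$. The task is to pick $\eta$ so that the twist kills the ramification of $\chi_1$ at $p$ while remaining globally congruent to~$1$.

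The hypothesis that $\chi_1$ is unramified modulo $\lambda^m$ means that $\chi_1|_{I_p}$ takes values in $1+\lambda^m$; since the torsion subgroup of $1+\lambda^m$ is pro-$\ell$, this inertial character has $\ell$-power order. Via local class field theory it corresponds to a finite-order character of $\ZZ_p^\times$. I would let $\eta$ be the Dirichlet character of $p$-power conductor equal to the $p$-conductor of $\chi_1$ whose associated character of $\ZZ_p^\times$ is the inverse of this one. Then $\eta$ is ramified only at $p$, its values lie in $1+\lambda^m$ (so $\eta \equiv 1 \pmod{\lambda^m}$ globally), and its local component at $p$ agrees with $\chi_1^{-1}$ on inertia.

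Now set $g = f \otimes \eta$ and examine $\rho_g = \rho_f \otimes \eta$. On the decomposition group at $p$ it splits as $(\chi_1\eta) \oplus (\chi_2\eta)$. By construction $(\chi_1\eta)|_{I_p}$ is trivial, so $\chi_1\eta$ is unramified at $p$; meanwhile $(\chi_2\eta)|_{I_p} = (\chi_2 \chi_1^{-1})|_{I_p}$ has conductor exponent at most $\max(a,b)$, where $a$ and $b$ denote the conductor exponents at $p$ of $\chi_1$ and $\chi_2$. Since both characters are ramified, $a,b \ge 1$, and hence $\max(a,b) \le a+b-1$. By local-global compatibility (Carayol's theorem), the $p$-part of the Artin conductor of $\rho_g$, and hence of the level of $g$, divides $p^{a+b-1}$, which is the $p$-part of $M/p$; at primes other than $p$ the level is unaffected because $\eta$ is unramified there. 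Thus $g \in S_k(\Gamma_1(M/p))$ and the congruence $f \equiv g \pmod{\lambda^m}$ follows immediately from $\eta \equiv 1 \pmod{\lambda^m}$.

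The main obstacle is the familiar subtlety that the naive $q$-series $\sum_n \eta(n) a_n(f) q^n$ need not be the appropriate normalised eigenform at primes dividing the level, most notably at $p$ itself: one may need to replace it by the associated newform in its Hecke orbit to obtain a genuine normalised eigenform of the predicted level. Such an adjustment changes only eigenvalues at primes inside the level and so does not disturb the congruence at the eigenvalues that determine the Galois representation.
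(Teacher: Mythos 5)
Your proof is essentially identical to the paper's: both twist $f$ by a Dirichlet character of $p$-power conductor whose local component at $p$ inverts the inertial part of $\chi_1$, and both observe that this character is $\equiv 1 \pmod{\lambda^m}$ because $\chi_1|_{I_p}$ takes values in $1+\lambda^m$ (hence has $\ell$-power order). The only cosmetic difference is the final conductor bookkeeping: the paper records that $\chi_1$ is consequently tamely ramified (conductor exponent $1$) and does a tame/wild case split on $\chi_2$, while you use the uniform bound $\max(a,b)\le a+b-1$ for $a,b\ge 1$; both yield the same drop of exactly one power of $p$ in the level.
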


\begin{proof}
We can decompose $\chi_1 = \chi_{1,\unram} \chi_{1,\ram}$ into an unramified and a ramified character of $G_{\QQ_p}$.
As $p \neq \ell$, the order of $\chi_{1,\ram}$ is finite.
By assumption,  $\chi_{1,\ram} \equiv 1 \mod \lambda^m$, whence in particular the order of $\chi_{1,\ram}$ is a power of~$\ell$ because
only roots of unity of $\ell$-power order vanish under reduction modulo~$\lambda$. Thus $\chi_{1,\ram}$ is tamely ramified.
By the local and the global Kronecker-Weber theorems, $\chi_{1,\ram}$ can be seen as a global Dirichlet character $\tilde{\chi}_{1,\ram}$ of conductor~$p$
the restriction of which to $G_{\QQ_p}$ equals $\chi_{1,\ram}$.

Let now $g$ be the newform corresponding to the twist $f \otimes \tilde{\chi}_{1,\ram}^{-1}$.
Then the restriction to a decomposition group at~$p$ of the $\ell$-adic Galois representation attached to~$g$ is isomorphic
to $\chi_{1,\unram} \oplus \chi_2 \chi_{1,\ram}^{-1}$.
If $\chi_2$ is tame (i.e.\ of conductor~$p$), then $\chi_2 \chi_{1,\ram}^{-1}$ is either tame or unramified, and in any case its conductor divides~$p$.
If $\chi_2$ is wild, i.e.\ it factors through $\Gal(\QQ_p(\zeta_{p^rN})/\QQ_p)$ with $r \ge 2$ and $p \nmid N$, but not through
$\Gal(\QQ_p(\zeta_{p^{r-1}N})/\QQ_p)$, then also $\chi_2 \chi_{1,\ram}^{-1}$ factors through $\Gal(\QQ_p(\zeta_{p^rN})/\QQ_p)$ but not through $\Gal(\QQ_p(\zeta_{p^{r-1}N})/\QQ_p)$,
whence the conductor of $\chi_2 \chi_{1,\ram}^{-1}$ equals that of~$\chi_2$. In both cases we hence find that
the conductor of $\chi_2 \chi_{1,\ram}^{-1}$ divides the conductor of~$\chi_2$.
Since the $p$-valuation of~$M$ equals the $p$-valuation of the conductor of~$\chi_2$ plus~$1$ (since the conductor of $\chi_1$ is~$p$)
and the $p$-valuation of the newform level of~$g$ is the $p$-valuation of the conductor of $\chi_2 \chi_{1,\ram}^{-1}$,
it is clear that the newform level of~$g$ divides $M/p$.
\qed\end{proof}

\begin{proposition}[Special ramified case]
Let $f \in S_k(\Gamma_1(M))$ be a newform such that the restriction to a decomposition group at~$p \neq \ell$
of the $\ell$-adic Galois representation attached to~$f$ is isomorphic to $\chi \otimes \fmat \omega * 0 1$, where $\chi$ and $*$ ramify and $\omega$ is
the $\ell$-adic cyclotomic character.
Let $\lambda$ be a prime ideal of a number field containing the coefficients of~$f$.

If $\chi$ is unramified modulo~$\lambda^m$, then there exists a newform $g \in S_k(\Gamma_1(M/p))$ such that $f \equiv g \mod \lambda^m$.
\end{proposition}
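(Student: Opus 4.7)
The plan is to adapt the argument of the preceding proposition almost verbatim: twist $f$ by the global Dirichlet character attached to the ramified part of~$\chi$, verify that the twist is congruent to $f$ modulo~$\lambda^m$, and analyse the local conductor at~$p$ to confirm that the newform level divides $M/p$.

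First I would decompose $\chi = \chi_\unram \cdot \chi_\ram$ as characters of $G_{\QQ_p}$. The hypothesis that $\chi$ is unramified modulo~$\lambda^m$ translates into $\chi_\ram \equiv 1 \mod \lambda^m$. Since $p \neq \ell$, only roots of unity of $\ell$-power order can reduce to~$1$ modulo~$\lambda$, so $\chi_\ram$ has $\ell$-power order. Because the wild inertia at~$p$ is pro-$p$, no non-trivial $\ell$-power order character can be wild, so $\chi_\ram$ is tame with conductor exponent~$1$. Local and global Kronecker-Weber then extend $\chi_\ram$ to a global Dirichlet character $\tilde{\chi}_\ram$ of conductor~$p$ whose values on Frobenii agree with those of $\chi_\ram$ on tame inertia, so that globally $\tilde{\chi}_\ram \equiv 1 \mod \lambda^m$.

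Next I would let $g$ be the newform corresponding to the twist $f \otimes \tilde{\chi}_\ram^{-1}$. The global congruence $\tilde{\chi}_\ram \equiv 1 \mod \lambda^m$ gives $a_q(g) \equiv a_q(f) \mod \lambda^m$ at all primes~$q$, hence $f \equiv g \mod \lambda^m$. It remains to check that the newform level of~$g$ divides $M/p$. The local Galois representation of~$g$ at~$p$ is $\chi_\unram \otimes \fmat \omega * 0 1$, an unramified twist of the special (Steinberg) representation (since $\omega$ is unramified at $p \neq \ell$), with Artin conductor exponent $\dim V - \dim V^{I_p} + \mathrm{Sw} = 2 - 1 + 0 = 1$. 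The same computation applied to $\chi \otimes \fmat \omega * 0 1$ with $\chi$ of conductor exponent $c \geq 1$ yields conductor exponent~$2c$, so the twist reduces the $p$-valuation of the level from $2c$ to $1 \leq 2c - 1$; at primes $q \neq p$ the conductor is unchanged. Thus $g \in S_k(\Gamma_1(M/p))$.

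The main obstacle I anticipate is the bookkeeping step of transferring the local congruence on~$\chi_\ram$ to a global congruence on the Dirichlet character~$\tilde{\chi}_\ram$: one must trace the compatibilities between the local and global Kronecker-Weber identifications to ensure that the congruence at Frobenii matches the local congruence on tame inertia. Once this identification is in place, the remaining conductor calculation is a routine Artin-conductor exercise, essentially identical in structure to the one in the split ramified case.
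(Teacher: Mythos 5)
Your proof is correct and takes essentially the same approach as the paper, which simply refers back to the split ramified case and adds the observation that tameness of $\chi$ forces $p^2 \,\|\, M$ (so the newform level of $g$ is exactly $M/p$). Your more detailed conductor computation recovers precisely this: since $\chi_{\ram}$ is tame, $c=1$, so the $p$-valuation drops exactly from $2$ to $1$; the inequality $1\le 2c-1$ you state is somewhat misleadingly general given that $c$ is forced to be~$1$, but the conclusion is the same.
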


\begin{proof}
The proof is essentially the same as in the split ramified case. Note, however, that the tameness of $\chi$ implies that $p^2$ exactly divides~$M$,
whence the newform level of~$g$ will be exactly~$M/p$.
\qed\end{proof}

These propositions may be useful in some situations. 
We also remark that the only Dirichlet character that is trivial modulo~$\ell^2$ in the sense of being equal to $1 \in \Zmod{\ell^2}$
is the trivial one.
That is just due to the fact that $\lambda := 1-\zeta_\ell$ is a uniformiser
of $\QQ_\ell(\zeta_\ell)$, whence $\zeta_\ell \not\equiv 1 \mod (\lambda)^2$.
This implies that the level does not lower modulo $\ell^m$ for any $m \ge 2$ at primes~$p$ satisfying the hypothesis of one of the
preceding propositions.
We now quote the main result from~\cite{Dummigan2015}, including the discussion in the last paragraph of that article.

\begin{theorem}[Dummigan]\label{thm:dummigan}
Let $\ell$ be a prime.
Let $\ell+2 > k \ge 2$ and let $p$ be a prime not dividing $N \in \NN$ such that $p \not \equiv 1 \mod \ell$.
Let $f \in S_k(\Gamma_1(Np))$ be an eigenform and let $\lambda$ be a prime of the coefficient field of~$f$ above~$\ell$.
Suppose that the residual Galois representation of~$f$ modulo~$\lambda$ is irreducible.

If for some $m \ge 1$ the Galois representation of~$f$ modulo~$\lambda^m$ is unramified at~$p$,
then there is a weak eigenform~$g$ of weight~$k$ and level~$\Gamma_1(N)$ such that $f \mod \lambda^m$ equals~$g$
at all coefficients the index of which is coprime to~$p$.
\end{theorem}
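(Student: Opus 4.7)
The plan is to adapt the geometric level-raising argument of Theorem~\ref{thm:main} to run in reverse and in weight~$k$, carrying an eigensystem from level $Np$ down to level~$N$. Write $\TT = \TT_k(\Gamma_1(Np))$, $\TT' = \TT_k(\Gamma_1(N))$, let $I = \ker(f \bmod \lambda^m)$, and let $\fm \supset I$ be the maximal ideal of $\TT$ associated to $\rhobar$; by the irreducibility hypothesis, $\fm$ is non-Eisenstein. I would realise $f \bmod \lambda^m$ on a weight-$k$ version of Ribet's geometric setup, namely the localisation at $\fm$ of
$$ H := H^1_{\text{\'et}}\bigl(X_1(Np)_{\Qbar},\, \operatorname{Sym}^{k-2}(\cO/\lambda^m)^{\oplus 2}\bigr). $$
The multiplicity-one/Gorenstein package in weight~$k$, available in the Fontaine-Laffaille range $k \le \ell+1$ exactly as used in Proposition~\ref{prop:VI-ff}, makes $H_\fm[I]$ a faithful $\TT/I$-module. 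The two degeneracy morphisms $\pi_1,\pi_p : X_1(Np) \to X_1(N)$ assemble into a map
$$ \alpha : H^1_{\text{\'et}}\bigl(X_1(N)_{\Qbar},\operatorname{Sym}^{k-2}(\cO/\lambda^m)^{\oplus 2}\bigr)_\fm^{\oplus 2} \longrightarrow H_\fm $$
intertwining the Hecke action exactly as in \eqref{eq:1}--\eqref{eq:2}, with $p$ replaced by $\langle p\rangle p^{k-1}$ in the $U_p$-relation; its kernel and cokernel obstructions are annihilated by $T_r-(1+\langle r\rangle r^{k-1})$ for almost all primes~$r$, and so are Eisenstein and vanish after $\fm$-localisation.

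The decisive input is a Hecke-theoretic reformulation of the hypothesis ``$\rho_f \bmod \lambda^m$ is unramified at $p$''. Local-global compatibility at $p\ne\ell$ identifies $f(U_p)$ with the Frobenius eigenvalue on the unramified quotient of the local representation, so unramifiedness modulo $\lambda^m$ forces the existence of $a \in \cO/\lambda^m$ with
$$ f(U_p)^2 - a\,f(U_p) + \langle p\rangle p^{k-1} \;\equiv\; 0 \pmod{\lambda^m}, \qquad a \equiv f(T_p) \pmod{\lambda^m}. $$
Equivalently, a vector of shape $\bigl(x, -f(U_p)x\bigr)$ in the source of $\alpha$ is killed by the weight-$k$ Eichler-Shimura matrix $\fmat{1+\langle p\rangle p^{k-1}}{T_p}{T_p}{1+\langle p\rangle p^{k-1}}$; this is the direct analogue, in the lowering direction, of the vanishing identity that drives the proof of Lemma~\ref{lem:VI-alpha}.

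Running the end of the proof of Theorem~\ref{thm:main} in reverse, Hecke-equivariance of $\alpha$ at primes $q \ne p$ transports the faithful support of $H_\fm[I]$ into a $\TT'$-stable piece of the source of $\alpha$ that realises the eigensystem $\{f(T_q)\bmod\lambda^m : q\ne p\}$; the $\Gamma_1(N)$-analogue of Proposition~\ref{prop:VI-ff} endows this piece with a faithful action of some quotient $\TT'/I'$, and the resulting ring homomorphism $g:\TT'\to\cO/\lambda^m$ is the required weak eigenform. The main obstacle will be to show that $H_\fm[I]$ indeed lies inside the image of~$\alpha$, i.e.\ that the eigensystem has no $p$-new contribution modulo~$\lambda^m$. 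Both numerical hypotheses enter exactly here: the condition $p \not\equiv 1 \pmod{\ell}$ makes the weight-$k$ analogue of the congruence factor $p-1$ governing the Ribet-type sequence~\eqref{iso:1} (and, in the $\Gamma_1$-setting, the order of the associated piece of the Shimura subgroup~$\Sh$) a unit modulo~$\lambda$, cleanly separating the $p$-new and $p$-old parts after $\fm$-localisation; and the condition $k\le\ell+1$ keeps the coefficient sheaf $\operatorname{Sym}^{k-2}$ in the Fontaine-Laffaille range, so that ``unramified modulo $\lambda^m$'' is captured by the simple Hecke congruence above, with no wild local contribution at $p$ to spoil the descent.
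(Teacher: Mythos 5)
The paper does not actually prove this statement: Theorem~\ref{thm:dummigan} is quoted verbatim from Dummigan's article \cite{Dummigan2015}, with the paper merely summarising it and pointing to Dummigan's example showing $g$ need not be strong. So there is no ``paper's own proof'' to compare against; I can only assess your proposal on its own merits and against the general shape of Dummigan's argument.

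Your proposal is in the right register — a cohomological adaptation of the Ribet-style geometry used in Section~\ref{sec:raising}, with $\operatorname{Sym}^{k-2}$-coefficients, multiplicity one in the Fontaine--Laffaille range, and the degeneracy maps at $p$ — but it has a genuine gap at the decisive step, which you name yourself: showing that $H_\fm[I]$ lies in the image of $\alpha$, i.e.\ that the eigensystem modulo $\lambda^m$ at level $Np$ is entirely $p$-old. ``Running the end of the proof of Theorem~\ref{thm:main} in reverse'' cannot supply this. Level raising proves that a certain $\fm$-primary subgroup of $J_0(Np)$ constructed from level $N$ meets the $p$-\emph{new} part $A\cap B$; level lowering needs the opposite and much stronger assertion that a given $\fm$-primary kernel at level $Np$ is contained in the $p$-\emph{old} part, with nothing escaping into the $p$-new quotient modulo $\lambda^m$. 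These are logically independent statements, and the second is the real content of the theorem. In Dummigan's work it rests on an Ihara-type exactness result for the sequence relating the cohomology at levels $N$ and $Np$ after $\fm$-localisation, together with a precise analysis of the $U_p$-Hecke polynomial $X^2 - T_pX + \langle p\rangle p^{k-1}$ modulo $\lambda^m$ under the unramifiedness hypothesis. Your appeal to $p\not\equiv 1\pmod\ell$ to ``cleanly separate the $p$-new and $p$-old parts'' is the correct heuristic for where that hypothesis enters (it makes the obstruction group, governed by a factor of $p-1$ in the Ihara sequence, vanish after localisation), but it is an observation rather than a proof, and the lemma you would need — an Ihara lemma at the level of $\cO/\lambda^m$-coefficients in weight~$k$ — is not an immediate corollary of anything in Section~\ref{sec:raising}.

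Two smaller points. First, the coefficient system should be $\operatorname{Sym}^{k-2}$ of the rank-$2$ relative Tate module local system with $\cO/\lambda^m$-coefficients; writing $\operatorname{Sym}^{k-2}(\cO/\lambda^m)^{\oplus 2}$ is at best imprecise. Second, the passage from ``$\rho_f\bmod\lambda^m$ is unramified at $p$'' to the congruence on $f(U_p)$ needs more care when $f$ is genuinely $p$-new: in that case the local representation at $p$ of the $\lambda$-adic lift is Steinberg or ramified principal series, and the precise Hecke reformulation of mod-$\lambda^m$ unramifiedness is one of the technical points Dummigan has to establish, not a consequence of local-global compatibility alone.
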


Dummigan also gives an explicit example where the resulting form~$g$ cannot be strong.
We include another still unpublished result from~\cite{PacettiCamporino} on level lowering,
which is proved using the deformation theory of Galois representations.

\begin{theorem}[Pacetti-Camporino]
Let $\ell \ge 7$ be a prime.
Let $2 \le k \le \ell-1$.
Let $M$ be a positive integer.
Let $f \in S_k(\Gamma_1(M))$ be an eigenform with coefficients in $K_f$.
Let $\cO_f$ be the ring of integers of~$K_f$.
Assume that
\begin{itemize}
\item $\ell$ is unramified in~$\cO_f$, and
\item $\SL_2(\cO_f/\lambda)$ is a subgroup of the image of the mod~$\lambda$ representation attached to~$f$.
\end{itemize}
If $p \mid M$ is a prime and $m \ge 1$ is an integer such that the modulo~$\lambda^m$ Galois representation
associated with $f$ is unramified at~$p$,
then there is a weak eigenform~$g$ of weight~$k$ and level~$\Gamma_1(M/p)$ such that $f \mod \lambda^m$ equals~$g$
at all coefficients the index of which is coprime to~$p$.
\end{theorem}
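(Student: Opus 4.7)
The plan is to proceed via the deformation-theoretic ($R=T$) approach. Write $\rhobar = \rho_f \pmod{\lambda}$ for the residual representation attached to $f$. The hypothesis $\SL_2(\cO_f/\la) \subseteq \mathrm{Im}(\rhobar)$ gives absolute irreducibility of $\rhobar|_{G_{\QQ(\zeta_\ell)}}$ and the ``big image'' condition needed by standard modularity lifting theorems; together with $\ell \ge 7$ it also rules out the pathological residual shapes at~$\ell$ appearing in Theorem~\ref{thm:modularity}. The bounds $2 \le k \le \ell-1$ together with $\ell$ unramified in~$\cO_f$ place us in the Fontaine--Laffaille range, so the natural local deformation condition at~$\ell$ is to be crystalline of Hodge--Tate weights $\{0, k-1\}$.

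First I would set up a minimal deformation problem tailored to level $M/p$. Let $\Sigma$ be the set of primes dividing $M/p$ together with $\ell$. For each prime $q \in \Sigma \setminus \{\ell\}$, take the local deformation condition at~$q$ to be the ``minimal'' one naturally cut out by $\rho_f|_{G_{\QQ_q}}$ (fixing, up to twist, the inertial type), so that the conductor exponent at~$q$ of any such deformation coincides with that of $\rho_f$. At~$\ell$ impose the Fontaine--Laffaille condition of weight~$k$, and at the distinguished prime~$p$ impose the unramified condition. Let $R^{\min}_\rhobar$ be the resulting universal deformation ring for deformations of $\rhobar$ to complete Noetherian local $\cO_{f,\la}$-algebras with residue field $\cO_f/\la$.

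Next I would invoke the corresponding $R=T$ theorem at level $M/p$: under the big image and local hypotheses above, the modularity lifting results of Kisin (refined by Diamond--Flach--Guo and by Khare--Wintenberger) give an isomorphism $R^{\min}_\rhobar \isom \TT^{\min}$, where $\TT^{\min}$ is the localisation at~$\rhobar$ of the relevant (partially full) Hecke algebra acting on $S_k(\Gamma_1(M/p))$, tensored up to $\cO_{f,\la}$. By construction $\rho_f \pmod{\lambda^m}$ satisfies every chosen local condition: those at $q \neq p$ are inherited from $\rho_f$ itself, the Fontaine--Laffaille condition at~$\ell$ is inherited as well, and the unramified condition at~$p$ is the main hypothesis. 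Hence it corresponds to an $\cO_{f,\la}$-algebra morphism $R^{\min}_\rhobar \to \cO_f/\la^m$, which via $R=T$ factors through $\TT^{\min}$ to produce the desired weak eigenform $g$ of weight $k$ and level $\Gamma_1(M/p)$. Since the Galois representation $\rho_g$ associated with $g$ is then isomorphic to $\rho_f \pmod{\lambda^m}$, the eigenvalues $g(T_n)$ and $f(T_n) \pmod{\lambda^m}$ agree at all indices $n$ coprime to~$p$, via the Eichler--Shimura relation for $n$ coprime to the level and via the matching of local conditions at primes of bad reduction other than~$p$.

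The main obstacle lies in choosing the local deformation condition at primes $q \mid M/p$ precisely enough that the $R=T$ identification holds at level exactly $M/p$, rather than at some strictly larger level. The big image hypothesis enables a Taylor--Wiles--Kisin patching argument, and mod~$\ell$ local--global compatibility at bad primes (where $\ell \ge 7$ and residual absolute irreducibility are important to avoid pathological local Galois cohomology groups) matches the minimal deformation condition with the Hecke algebra at exactly the conductor of~$\rho_f|_{G_{\QQ_q}}$. The Fontaine--Laffaille restriction at~$\ell$ and the unramifiedness of~$\ell$ in~$\cO_f$ are what keep the local deformation ring at~$\ell$ smooth of the expected dimension, which is essential for the patching to yield the clean minimal statement; loosening either assumption would jeopardise this step and appears to be exactly what Pacetti--Camporino's hypotheses reflect.
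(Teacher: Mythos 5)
Your proposal takes a genuinely different route from the one the paper sketches. The paper states explicitly that Camporino--Pacetti proceed by (i) applying Ramakrishna's lifting technique, introducing auxiliary primes to kill local obstructions, in order to produce an $\ell$-adic lift of $\rho_f \bmod \lambda^m$ that is unramified at~$p$ but may be ramified at the auxiliary primes; (ii) proving and applying a modularity lifting theorem to realise this lift as the Galois representation of a genuine newform at a level $M/p \cdot \prod q_i$; and then (iii) invoking Dummigan's Theorem~\ref{thm:dummigan} to strip the auxiliary primes $q_i$ one at a time (which is why those primes are chosen with $q_i \not\equiv 1 \bmod \ell$). Your proof instead attempts a one-shot $R=T$ identification at level exactly $\Gamma_1(M/p)$, with the correct inertial type imposed at each $q \mid M/p$, the Fontaine--Laffaille condition at~$\ell$, and the unramified condition at~$p$, and then reads off the weak eigenform from the universal property applied to $\rho_f \bmod \lambda^m$. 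The approaches are not variants of one another: yours never constructs an $\ell$-adic lift, never introduces auxiliary primes, and never uses Dummigan at all.

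The gap in your argument is concentrated in the sentence that invokes ``the corresponding $R=T$ theorem at level $M/p$''. What you need is a literal isomorphism of local $\cO$-algebras (not just $R^{\mathrm{red}} \cong \TT$, since the target $\cO_f/\lambda^m$ has nilpotents, and not just $R[1/\ell] \cong \TT[1/\ell]$), with $\TT$ being the Hecke algebra at \emph{exactly} the level $M/p$ cut out by the chosen local conditions at each $q \mid M/p$. Producing such an isomorphism is precisely where the local obstructions that Camporino--Pacetti work so hard to neutralise would bite: the patching argument requires the framed local deformation rings with your fixed inertial types and the unramified condition at~$p$ to be $\cO$-flat of the expected dimension, and this is not automatic for arbitrary $\rhobar|_{G_{\QQ_q}}$. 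You flag this as ``the main obstacle'' but then assume it away by appealing to the big-image hypothesis; big image removes the Taylor--Wiles obstructions but does nothing to guarantee the local deformation rings at bad primes are unobstructed. Camporino--Pacetti's auxiliary primes exist to relax the global deformation problem until it is unobstructed, at the cost of raising the level, and Dummigan's theorem is the tool that brings the level back down. If a clean $R=T$ at level $M/p$ in this generality were available, that elaborate detour would be unnecessary, so one should regard its availability as the content of the theorem rather than as an input.
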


This result is proved by first applying techniques of Ramakrishna: by introducing auxiliary primes in order to kill
local obstructions, the authors construct an $\ell$-adic lift in which $p$ remains unramified.
They then prove and use a modularity lifting theorem to obtain that their lift is associated with some newform.
Finally, they apply Theorem~\ref{thm:dummigan} to remove the auxiliary primes, which had been chosen in such a way that
Dummigan's theorem applies.

\section{Computational aspects}\label{sec:alg}

In this section, we describe various algorithms we have implemented and used in our computational study of higher congruences.

\subsubsection*{Some commutative algebra}

We start by summarising some well known facts from commutative algebra.
Let $R$ be an {\em Artinian} ring, i.e.\ a ring in which every descending chain
of ideals becomes stationary.
In particular, for any ideal $\fa$ of~$R$, the sequence $\fa^n$ becomes stationary,
i.e.\ $\fa^n = \fa^{n+1}$ for all $n$ ``big enough''. We will then use the notation $\fa^\infty$ for $\fa^n$.
The following proposition is well known and easy to prove:

\begin{proposition}\label{prop:artin}
Let $R$ be an Artinian ring.
Then every prime ideal of $R$ is maximal and there are only finitely many maximal ideals in~$R$.
Moreover, the maximal ideal~$\fm$ is the only one containing~$\fm^\infty$.
Furthermore, if  $\fm \neq \fn$ are two maximal ideals, then for any $k \in \NN \cup \{\infty\}$, the ideals $\fm^k$ and $\fn^k$
are coprime.
The Jacobson radical $\bigcap_{\fm \in \Spec(R)} \fm$ is equal to the nilradical and consists of the nilpotent elements,
and we have $\bigcap_{\fm \in \Spec(R)} \fm^\infty = (0)$.
Moreover, for every maximal ideal~$\fm$, the ring $R/\fm^\infty$ is local with maximal ideal~$\fm$ and is hence isomorphic to $R_\fm$,
the localisation of $R$ at~$\fm$.
Finally, by virtue of the Chinese Remainder Theorem we have the following isomorphism, referred to as {\em local decomposition}:
$$R \xrightarrow{a \mapsto (\dots, a + \fm^\infty, \dots)} \prod_{\fm \in \Spec(R)} R/\fm^\infty \cong \prod_{\fm \in \Spec(R)} R_\fm. $$
\end{proposition}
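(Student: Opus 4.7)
The plan is to prove the statements in essentially the stated order, as each builds on the previous, and to rely throughout only on the descending chain condition. First I would show every prime~$\mathfrak{p}$ is maximal: the quotient $R/\mathfrak{p}$ is an Artinian integral domain, so for any nonzero~$\bar x$ the descending chain $(\bar x^n)$ stabilises, yielding $\bar x^n = \bar x^{n+1} u$ and hence $\bar x u = 1$ after cancellation in the domain. For finiteness of the maximal spectrum, I would pick by DCC a minimal element $\fm_1 \cap \cdots \cap \fm_s$ among the finite intersections of maximal ideals; any further maximal~$\fm$ would then contain this intersection (via minimality of the further intersection with~$\fm$) and hence some~$\fm_i$ by primality, forcing $\fm = \fm_i$.

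For coprimality of $\fm^k$ and $\fn^k$ with $\fm \neq \fn$ maximal, I would use the standard fact that $\fm + \fn = R$ implies $\fm^k + \fn^k = R$ by a binomial expansion of $1 = (a+b)^{2k}$ with $a \in \fm$, $b \in \fn$; the case $k = \infty$ reduces to a finite exponent, since $\fm^\infty = \fm^N$ for large~$N$. The Jacobson radical $J = \bigcap_\fm \fm$ then equals the nilradical because every prime is maximal and the nilradical is the intersection of all primes. To see $J^N = 0$ for some~$N$, I would invoke the classical minimal-annihilator argument in Artinian rings: if $J^\infty \neq 0$, take by DCC a minimal ideal~$\fa$ with $\fa J^\infty \neq 0$, pick $x \in \fa$ with $xJ^\infty \neq 0$ to obtain $\fa = (x)$, observe $xJ = (x)$ by minimality, and then apply Nakayama to $(x)$ to get $x = 0$, a contradiction. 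The identity $\bigcap_\fm \fm^\infty = 0$ then falls out, since coprimality turns the intersection into the product $\fm_1^\infty \cdots \fm_s^\infty$, and for large~$N$ this sits inside $(\fm_1 \cdots \fm_s)^N \subseteq J^N = 0$.

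For the last block, $R/\fm^\infty$ is local with maximal ideal~$\fm$ by the third statement already proved, and the Chinese Remainder isomorphism $R \cong \prod_\fm R/\fm^\infty$ follows from pairwise coprimality (surjectivity) together with the vanishing of $\bigcap_\fm \fm^\infty$ (injectivity). To identify $R/\fm^\infty$ with $R_\fm$, I would use the product decomposition to produce an idempotent $e_\fm$ corresponding to the $\fm$-factor; since $e_\fm \notin \fm$, it becomes a unit in~$R_\fm$, which annihilates all other idempotents $e_\fn$ (as $e_\fm e_\fn = 0$ for $\fn \neq \fm$) and hence makes the other factors vanish in~$R_\fm$, leaving $R_\fm \cong R/\fm^\infty$, which is already local. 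The main obstacle is really bookkeeping: coprimality is needed for CRT, CRT and the nilpotency of~$J$ are both needed for the zero-intersection statement, and these in turn are needed for identifying the local components with localisations. No single ingredient is deep, but the steps must be sequenced carefully so that none is invoked before it is available.
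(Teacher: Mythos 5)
The paper offers no proof of this proposition at all; it is stated with only the remark that it is ``well known and easy to prove,'' so there is nothing in the paper to compare your argument against. Your proof is the standard Atiyah--Macdonald-style treatment and is correct. The one genuine wrinkle: in the final block you appeal to ``the third statement already proved'' (that $\fm$ is the only maximal ideal containing $\fm^\infty$) to conclude that $R/\fm^\infty$ is local, but nothing in the preceding paragraphs actually establishes that claim. It is a one-line fix --- if a maximal ideal $\fn$ contains $\fm^\infty = \fm^N$, primality of $\fn$ forces $\fm \subseteq \fn$ and maximality forces $\fm = \fn$ --- and you should insert it before invoking it. Everything else is sound and correctly sequenced: primes are maximal via the Artinian-domain argument, finiteness of $\Spec(R)$ via a DCC-minimal finite intersection of maximal ideals, coprimality of $\fm^k$ and $\fn^k$ via the binomial expansion (with $k=\infty$ handled by stabilisation), the Jacobson radical equals the nilradical because all primes are maximal, nilpotency of the Jacobson radical via the minimal-annihilator/Nakayama argument, the vanishing of $\bigcap_\fm \fm^\infty$ by combining coprimality with $J^N=0$, the product decomposition by CRT, and the identification $R/\fm^\infty \cong R_\fm$ by localising the idempotent decomposition at $\fm$.
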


\begin{definition}
An {\em idempotent} of a ring~$R$ is an element~$e$ that satisfies $e^2=e$.
Two idempotents $e$, $f$ are {\em orthogonal} if $ef=0$.
An idempotent $e$ is {\em primitive} if it cannot be written as a sum of two idempotents both different from~$0$.
A set of idempotents $\{e_1,\dots, e_n\}$ is said to be {\em complete} if $1 = \sum_{i=1}^n e_i$.
\end{definition}

In concrete terms, a complete set of primitive pairwise orthogonal idempotents is given by
$(1,0,\dots,0), (0,1,0,\dots,0), \dots, (0,\dots,0,1)$.

\begin{proposition}[Newton method/Hensel lifting -- special case]
Let $R$ be a ring and $I$ be an ideal.
Let $f \in R[X]$ be a polynomial.
We assume that there exist $a \in R$ and a polynomial $b\in R[X]$ such that $1 = a f(X) + b(X)f'(X)$.
Let further $a_0 \in R$ be such that $f(a_0) \in I^r$ for some $r \ge 1$.
For $n \ge 1$, we make the following recursive definition:
$$ a_n := a_{n-1} - f(a_{n-1})b(a_{n-1}).$$
Then for all $n \in \NN$, we have $f(a_n) \in (I^r)^{2^n}$.
In particular, if $\bigcap_{n \ge 1} I^n = 0$ then the sequence $f(a_n)$ converges to~$0$ exponentially.
\end{proposition}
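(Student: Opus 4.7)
The plan is to prove by induction on $n \ge 0$ that $f(a_n) \in (I^r)^{2^n}$. The base case $n = 0$ is exactly the hypothesis $f(a_0) \in I^r$. For the inductive step, I would suppose $f(a_{n-1}) \in (I^r)^{2^{n-1}}$ and set $\delta := f(a_{n-1}) b(a_{n-1})$, so that $a_n = a_{n-1} - \delta$ and $\delta \in (I^r)^{2^{n-1}}$ by the induction hypothesis.

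The key input will be the formal Taylor expansion of a polynomial over~$R$: in $R[X,Y]$ one has $f(X - Y) = f(X) - f'(X)\, Y + Y^2\, g(X,Y)$ for some $g \in R[X,Y]$, as follows by expanding $(X-Y)^k$ term by term and observing that $(X-Y)^k - X^k + kX^{k-1}Y$ is divisible by $Y^2$. Evaluating at $X = a_{n-1}$ and $Y = \delta$ yields
$$f(a_n) = f(a_{n-1}) - f'(a_{n-1})\, \delta + \delta^2\, g(a_{n-1}, \delta).$$
Next I would use the global Bezout-type relation $1 = a f(X) + b(X) f'(X)$, evaluated at $X = a_{n-1}$, which reads $1 - b(a_{n-1}) f'(a_{n-1}) = a \cdot f(a_{n-1})$. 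Substituting the definition of~$\delta$ collapses the first two terms:
$$f(a_{n-1}) - f'(a_{n-1})\, \delta = f(a_{n-1})\bigl(1 - b(a_{n-1})\, f'(a_{n-1})\bigr) = a \cdot f(a_{n-1})^2.$$
Hence $f(a_n) = a \cdot f(a_{n-1})^2 + \delta^2\, g(a_{n-1}, \delta)$, and since both $f(a_{n-1})$ and $\delta$ lie in $(I^r)^{2^{n-1}}$, their squares lie in $(I^r)^{2^n}$, closing the induction.

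The exponential convergence under $\bigcap_{n \ge 1} I^n = 0$ is then automatic in the $I$-adic topology, since the exponent $2^n$ grows doubly quickly. As for obstacles: there are essentially none of any depth here, this being the classical Newton iteration. The only point deserving attention is to carry out the Taylor expansion purely formally over the commutative ring~$R$ (with a quadratic remainder obtained by polynomial manipulation, not by any analytic notion), and to use the hypothesis $1 = a f(X) + b(X) f'(X)$ as an identity in $R[X]$ which one specialises at $X = a_{n-1}$ at each step, rather than a local Bezout relation that would need to be produced anew at each iterate.
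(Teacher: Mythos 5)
Your proof is correct and takes essentially the same approach the paper intends — the paper dismisses the verification as "a straight forward calculation with Taylor expansions of the polynomial," and your argument is precisely that calculation, carried out cleanly: a formal quadratic Taylor expansion over~$R$, the Bezout identity $1 = af(X)+b(X)f'(X)$ specialised at $a_{n-1}$ to collapse the linear term into $a\,f(a_{n-1})^2$, and the resulting induction on the exponent $2^n$.
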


\begin{proof}
This is a straight forward calculation with Taylor expansions of the polynomial.
\qed\end{proof}

\begin{corollary}[Algorithmic idempotent lifting]\label{cor:alg-idem}
Let $R$ be a commutative $\ZZ_\ell$-al\-ge\-bra which is finitely generated as $\ZZ_\ell$-module.
Let $e_0 \in R/\ell R$ be an idempotent.
For $n \ge 1$, make the following recursive definition:
\begin{equation}\label{eq:idemlift}
e_n := e_{n-1} - (e_{n-1}^2-e_{n-1})(2e_{n-1}-1) = 3e_{n-1}^2 - 2e_{n-1}^3.
\end{equation}
Then $e_n^2 \equiv e_n \mod \ell^{2^n}R$ for all $n \ge 0$.
Moreover, the $e_n$ form a Cauchy sequence in~$R$ and thus converge to an idempotent $e \in R$
`lifting' $e_0$, i.e.\ the image of~$e$ in~$R/\ell R$ is~$e_0$.
\end{corollary}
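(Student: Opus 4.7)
The plan is to apply the preceding Newton/Hensel proposition to the polynomial $f(X) = X^2 - X \in R[X]$, with the ideal $I = \ell R$ and initial exponent $r = 1$. To set things up, first lift the given idempotent $e_0 \in R/\ell R$ to an element (also called $e_0$) of $R$; then $f(e_0) = e_0^2 - e_0 \in \ell R$ by hypothesis. To verify the hypothesis $1 = a f(X) + b(X) f'(X)$ of the proposition, I would write down the identity
\[
1 = (2X-1)^2 - 4(X^2 - X) = -4\,f(X) + (2X-1)\,f'(X),
\]
so one can take $a = -4 \in \ZZ_\ell \subseteq R$ and $b(X) = 2X-1 \in R[X]$ (valid regardless of whether $\ell = 2$, since this is a polynomial identity over $\ZZ$).

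With this choice of $b$, the recursion of the Newton proposition specialises to $a_n = a_{n-1} - f(a_{n-1})b(a_{n-1}) = e_{n-1} - (e_{n-1}^2 - e_{n-1})(2e_{n-1}-1)$, which expands to $3e_{n-1}^2 - 2e_{n-1}^3$, matching the recursion in the statement. The proposition then directly yields $f(e_n) = e_n^2 - e_n \in \ell^{2^n} R$ for all $n \ge 0$, which is the first claim.

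Next I would check that $(e_n)$ is Cauchy in the $\ell$-adic topology. Directly from the recursion, $e_n - e_{n-1} = -(e_{n-1}^2 - e_{n-1})(2e_{n-1}-1)$ lies in $\ell^{2^{n-1}} R$, so the sequence is Cauchy with very rapid convergence. Because $R$ is finitely generated as a $\ZZ_\ell$-module, it is automatically $\ell$-adically complete and Hausdorff (being a finite direct sum of modules of the form $\ZZ_\ell$ and $\ZZ_\ell/\ell^k \ZZ_\ell$, each of which has these properties, and the $\ell$-adic topology on $R$ coincides with the product topology). Hence the limit $e = \lim_n e_n$ exists in $R$.

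Finally, continuity of the polynomial map $x \mapsto x^2 - x$ together with $e_n^2 - e_n \in \ell^{2^n} R$ gives $e^2 - e = \lim_n (e_n^2 - e_n) = 0$, so $e$ is a genuine idempotent; and $e \equiv e_n \equiv e_0 \pmod{\ell R}$ shows it lifts the original $e_0 \in R/\ell R$. The only non-mechanical step is the appeal to completeness of $R$ in the $\ell$-adic topology, but this is standard for finitely generated $\ZZ_\ell$-modules, so no essential obstacle arises; everything else is a direct application of the preceding proposition.
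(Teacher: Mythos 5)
Your proposal is correct and takes exactly the same route as the paper: apply the preceding Newton/Hensel proposition to $f(X)=X^2-X$ with the Bézout identity $1=-4(X^2-X)+(2X-1)(2X-1)$, i.e.\ $a=-4$, $b(X)=2X-1$. You also spell out the Cauchy/completeness argument for the last claim, which the paper leaves implicit, but the essential step is identical.
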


\begin{proof}
This is a simple application of the Newton method to the polynomial $f(X) = X^2 - X$.
Note that we have $f'(X) = 2X-1$ and $1 = -4(X^2-X) + (2X-1)(2X-1)$.
\qed\end{proof}

The corollary thus tells us that any idempotent of $R/\ell R$ lifts to an idempotent of~$R$, and it
tells us that the lift can be approximated by a simple recursion formula that is easy to implement and
converges very rapidly.
We shall now apply the preceding considerations to a commutative $\ZZ_\ell$-algebra~$\TT$
which is free and finitely generated as a $\ZZ_\ell$-module.
Let $\Tbar = \TT \otimes \FF_\ell$ and $\TT_{\QQ_\ell} = \TT \otimes \QQ_\ell$.
Note that $\Tbar$ and $\TT_{\QQ_\ell}$ are Artinian rings because they are finite dimensional vector spaces.
The following well-known result follows from the above considerations together with some standard commutative algebra.

\begin{proposition}\label{prop:commalg}
The algebra $\TT$ is equidimensional (in the sense of Krull dimension) of dimension~$1$,
i.e.\ any maximal ideal $\fm$ strictly contains at least one minimal prime ideal~$\lambda$
and there is no prime ideal strictly in between the two.
The maximal ideals of $\TT$ correspond bijectively under taking pre-images to the maximal ideals of~$\Tbar$;
the same letter will be used to denote them.
The minimal primes $\lambda$ of $\TT$ are in bijection with the prime ideals of $\TT_{\QQ_\ell}$ (all of which are maximal)
under extension, for which the notation $\lambda^{(e)}$ will be used.
Under the correspondences, one has $\Tbar_\fm \cong \TT_\fm \otimes \FF_\ell$ and $\TT_\lambda \cong \TT_{\QQ_\ell,\lambda^{(e)}}$.
By virtue of lifts of idempotents and Proposition~\ref{prop:artin}, we have the local decompositions
$$ \TT \cong \prod_\fm \TT_\fm, \Tbar \cong \prod_\fm \Tbar_\fm \textnormal{ and }
 \TT_{\QQ_\ell} \cong \prod_\lambda \TT_{\QQ_\ell,\lambda^{(e)}} \cong  \prod_\lambda \TT_\lambda, $$ 
where $\fm$ runs through the maximal ideals of $\TT$ (and $\Tbar$) and $\lambda$ runs through the minimal primes of~$\TT$
(or, equivalently, all the prime=maximal ideals of~$\TT_{\QQ_\ell}$).
\end{proposition}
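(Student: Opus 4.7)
The plan is to exploit that $\TT$ is a finite, integral, torsion-free extension of the complete discrete valuation ring $\ZZ_\ell$. First I would observe that integrality (Cohen–Seidenberg) gives $\dim\TT=\dim\ZZ_\ell=1$, and every maximal ideal $\fm\subset\TT$ contracts to $(\ell)\subset\ZZ_\ell$, so contains~$\ell$. Reduction modulo~$\ell$ therefore puts the maximal ideals of $\TT$ in bijection with those of the Artinian ring $\Tbar$, which are finite in number by Proposition~\ref{prop:artin}. On the other side, $\TT$ being $\ZZ_\ell$-free makes $\ell$ a non-zero-divisor, so $\ell$ lies in no associated prime of $\TT$ and a fortiori in no minimal prime. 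Hence every minimal prime of $\TT$ survives in the localisation $\TT_{\QQ_\ell}=\TT[\ell^{-1}]$, and the usual localisation dictionary identifies the minimal primes of $\TT$ with the prime ideals of $\TT_{\QQ_\ell}$, all of which are maximal because $\TT_{\QQ_\ell}$ is Artinian as a finite $\QQ_\ell$-algebra. Equidimensionality is then immediate: any maximal ideal $\fm$ contains some minimal prime~$\lambda$ by Zorn, and because $\ell\in\fm\setminus\lambda$ the containment is strict, while $\dim\TT=1$ forbids any prime interpolating between $\lambda$ and $\fm$.

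For the local decomposition the main engine is Corollary~\ref{cor:alg-idem}. Applied to $\Tbar$, Proposition~\ref{prop:artin} yields a complete orthogonal family $\{\bar e_\fm\}$ of primitive idempotents. Since $\TT$ is a finite module over the complete ring $\ZZ_\ell$ it is $\ell$-adically complete and separated, so the Newton iteration lifts each $\bar e_\fm$ to an idempotent $e_\fm\in\TT$. I would next record the key uniqueness observation that any idempotent of $\TT$ reducing to~$0$ modulo~$\ell$ is automatically zero, since the relation $e=e^2$ forces $e\in\bigcap_n\ell^n\TT=0$. From this, pairwise orthogonality ($e_\fm e_{\fm'}$ is an idempotent reducing to~$0$), completeness ($1-\sum_\fm e_\fm$ is an idempotent reducing to~$0$), and primitivity of each $e_\fm$ (any decomposition $e_\fm=e+e'$ into orthogonal idempotents would reduce to a decomposition of $\bar e_\fm$, forcing one of $e,e'$ to vanish) all follow. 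The resulting product decomposition $\TT\cong\prod_\fm e_\fm\TT$ therefore has each factor local with maximal ideal the preimage of $\fm$, hence canonically isomorphic to $\TT_\fm$.

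The remaining identifications come for free. The commutation of localisation with the quotient by $\ell$ gives $\Tbar_\fm\cong\TT_\fm\otimes_{\ZZ_\ell}\FF_\ell$, and the commutation of localisation with the flat localisation $\TT\to\TT_{\QQ_\ell}$ yields $\TT_\lambda\cong\TT_{\QQ_\ell,\lambda^{(e)}}$; the decomposition $\TT_{\QQ_\ell}\cong\prod_\lambda\TT_{\QQ_\ell,\lambda^{(e)}}$ is Proposition~\ref{prop:artin} applied to the Artinian ring $\TT_{\QQ_\ell}$. The only slightly delicate point I foresee is verifying primitivity of the lifts $e_\fm$, so that $e_\fm\TT$ really is the local piece $\TT_\fm$ and does not split further; but once the uniqueness of Hensel lifts of idempotents is in hand, this descends cleanly from primitivity of the $\bar e_\fm$ and the rest of the proposition essentially writes itself.
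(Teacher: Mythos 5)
Your proposal is correct and follows exactly the route the paper indicates: the paper gives no detailed proof, stating only that the proposition ``follows from the above considerations [Proposition~\ref{prop:artin} and Corollary~\ref{cor:alg-idem}] together with some standard commutative algebra,'' and your argument is precisely a careful unwinding of that --- Cohen--Seidenberg for dimension and the bijections via contraction/extension, idempotent lifting (Corollary~\ref{cor:alg-idem}) together with $\ell$-adic separatedness to get the product decomposition of~$\TT$, and Proposition~\ref{prop:artin} for the Artinian rings $\Tbar$ and $\TT_{\QQ_\ell}$.
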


\subsubsection*{Package for computing $\ell$-adic decompositions}

The second author has developed the {\sc Magma} \cite{Magma} package {\sc pAdicAlgebras} (see \cite{pAdicAlgebras}) for computing
the objects appearing in Proposition~\ref{prop:commalg}.
The package depends on the second author's earlier {\sc Magma} package {\sc ArtinAlgebras} (see \cite{ArtinAlgebras}).

The main ingredients are standard linear algebra, especially over finite fields, and the algorithmic idempotent lifting
from Corollary~\ref{cor:alg-idem}.

\subsubsection*{Application of the commutative algebra to modular forms}

Let $S(\CC)$ be a space of modular forms, e.g.\ $S_k(\Gamma_1(N))$.
We only work with spaces that have a basis with coefficients in~$\ZZ$.
We denote by $S(R)$ the corresponding space with coefficients in the ring~$R$.
Here the notion $S(R)$ is the naive one via the standard $q$-expansion:
$S(R)$ is the set of $R$-linear combinations of the image of the $\ZZ$-basis in $R[[q]]$ via the standard $q$-expansion.
The space $S(R)$ can also be characterised as follows. The Hecke operators $T_n$ for $n \in \NN$ acting on $S(\CC)$
generate a ring (a $\ZZ$-algebra), denoted $\TT$, and we have the isomorphism
$$ S(R) \cong \Hom_\ZZ(\TT,R).$$
Concretely, if $\varphi \in \Hom_\ZZ(\TT,R)$, then $\sum_{n \ge 1} \varphi(T_n) q^n$ is a cusp form.
Thus a $\ZZ$-basis of $\TT$ gives rise to a `dual basis' of $S(R)$. We also speak of an `echelonised basis'.

By Proposition~\ref{prop:commalg}, we have the decompositions
$$ \TT_\QQ := \QQ \otimes_\ZZ \TT \cong \prod_{[f]} \TT_{[f]} \textnormal{ and } S(\QQ) \cong \bigoplus_{[f]} S_{[f]}(\QQ),$$
where the product and the sum run over $G_\QQ$-orbits of Hecke eigenforms.
If the space $S(\CC)$ is a newspace, then $S_{[f]}(\QQ)$ is the set of forms with coefficients in~$\QQ$
in the $\CC$-span of all the $G_\QQ$-conjugates of~$f$.
Concretely, $S_{[f]}(\ZZ)$ is the $\ZZ$-dual
of the $\ZZ$-algebra generated by the Hecke operators $T_n$ in $\TT_{[f]}$.
All Hecke operators acting on $S_{[f]}(\ZZ)$ are represented as matrices with $\ZZ$-entries.

We now consider $\TT_{\ZZ_\ell} = \ZZ_\ell \otimes_\ZZ \TT$.
Then we have $S(\ZZ_\ell) = \Hom_{\ZZ_\ell}(\TT_{\ZZ_\ell},\ZZ_\ell)$.
Importantly, again by Proposition~\ref{prop:commalg}, we have the decompositions
$$ \TT_{\ZZ_\ell} \cong \prod_{[\overline{f}]} \TT_{[\overline{f}]} \textnormal{ and } S(\ZZ_\ell) \cong \bigoplus_{[\overline{f}]} S_{[\overline{f}]}(\ZZ_\ell),$$
where the sum and the product run over the $G_{\FF_\ell}$-orbits of Hecke eigenforms in $S(\Fbar_\ell)$.
These correspond to the maximal ideals of $\TT_{\ZZ_\ell}$.
We refer to the $S_{[\overline{f}]}(\ZZ_\ell)$ either as $\ZZ_\ell$-orbits or as $G_{\FF_\ell}$-orbits.

We are also interested in $\QQ_\ell$-orbits of eigenforms inside a $\ZZ_\ell$-orbit.
By Proposition~\ref{prop:commalg}, $\QQ_\ell \otimes_{\ZZ_\ell} S(\ZZ_\ell) = S(\QQ_\ell)$
breaks as a direct sum
$$ S(\QQ_\ell) \cong \bigoplus_{[\tilde{f}]} S_{[\tilde{f}]}(\QQ_\ell),$$
where the sum runs over the $\Qbar_\ell$-valued eigenforms up to $G_{\QQ_\ell}$-conjugation.
The fact that these $G_{\QQ_\ell}$-orbits lie in a single $\ZZ_\ell$-orbit simply means that they are all congruent modulo a uniformiser.

\subsubsection*{Testing weak congruences}

The second author has developed the {\sc Magma} package {\sc WeakCong} (see \cite{WeakCong}), which has the purpose to
compute whether Hecke eigenforms over $\Qbar_\ell$ belong to given $\ZZ_\ell$-orbits
of Hecke eigenforms modulo powers of~$\ell$ (or uniformisers).
Here we briefly describe how it functions.

Let $n_1,\dots,n_r$ be indices such that $T_{n_1},\dots,T_{n_r}$ form a basis of the Hecke algebra~$\TT_{\ZZ_\ell}$
(which we may assume to be local by using the {\sc Magma} package {\sc pAdic\-Algebras}, see above).
We speak of {\em basis indices}.
These indices are computed via Nakayama's lemma, i.e.\ by reducing the matrices to $\FF_\ell$.

For any~$n$, we have $T_n = \sum_{i=1}^r a_{n,i} T_{n_i}$; in particular, $a_{n_j,i} = \delta_{i,j}$.
For each $i \in \{1,\dots,r\}$, we define a cusp form $f_i$ by specifying its coefficients as follows:
$$ a_n(f_i) := a_{n,i}.$$
Then $f_1,\dots,f_r$ form an $R$-basis of $\Hom_{\ZZ_\ell}(\TT_{\ZZ_\ell},R)$ for any $\ZZ_\ell$-algebra~$R$.
We call this basis {\em echelonised} because it is at the coefficients $n_1,\dots,n_r$.
It is the dual basis with respect to the basis $T_{n_1},\dots,T_{n_r}$ of~$\TT_{\ZZ_\ell}$.

Furthermore, we compute one $\Qbar_\ell$-eigenform for each $\QQ_\ell$-orbit inside the given $\ZZ_\ell$-orbit.
This is done via standard linear algebra over local fields, using both the new {\sc Magma} command LocalField and the older implementation.
If we find that a system of linear equations which mathematically must have a solution does not seem to have any, then
we lower the precision until the desired solution exists.
Thus, in this procedure generally some precision is lost.

Let $g = \sum_{n \ge 1} b_n q^n \in S(\Qbar_\ell)$ be an eigenform in some level and weight. Let $\cO$ be the valuation
ring of some finite extension of~$\QQ_\ell$ that contains all coefficients $b_n$ of~$g$, and let $\lambda$ be a uniformiser of~$\cO$.
The main purpose of this package is to compute the maximum integer~$m$ such that $g$ lies in a given
$\ZZ_\ell$-orbit (some level and some weight) modulo~$\lambda^m$.

Put $h := g - \sum_{i=1}^r b_{n_i} f_i$.
We then have:
$$ h \equiv 0 \mod \lambda^m \;\Leftrightarrow \; \exists\, s_1,\dots,s_r \in \cO: g \equiv \sum_{i=1}^r s_i f_i \mod \lambda^m.$$
This equivalence is clear as the basis is echelonised, whence automatically $s_i \equiv b_{n_i} \mod \lambda^m$ for all $i=1,\dots,r$.
The desired highest exponent~$m$ can thus be computed as the minimum of the valuations of the coefficients of~$h$ up to
the Sturm bound.

\section{Database of modular form orbits and higher congruences}\label{sec:database}

The first author has created a PostgreSQL database containing data on $\QQ$-, $\QQ_\ell$- and $\ZZ_\ell$-orbits,
as well as information on congruences modulo powers of~$\ell$.
We are currently planning to integrate parts of the database into the LMFDB.\footnote{\url{http://lmfdb.org}}

\subsubsection*{Technical features}

In this section we describe the way our database is organised and what kind of data it contains.
This will also highlight two important aspects of our approach:
\begin{itemize}
\item We do our best to avoid computing again data that are used more than once.
This aims to speed up the process of computing the $G_{\QQ_\ell}$-orbits. In order to do this we store a lot of useful information, even intermediate results, e.g.\ congruences with forms other than those that provide an optimal weight or level, even congruence of individual coefficients.
\item We try to parallelise as much of the problem as possible. This also aims at speeding up the computation of congruences. This becomes especially handy when the coefficient fields of the forms that are compared become large.
\end{itemize}
We will come back to both of these features after the description of the database tables.
We list them together with a brief description of the data each one holds.
\begin{enumerate}
\item {\bf Modular form spaces over $\QQ$}: For every level and weight we store some useful information: The dimension of its Eiseinstein subspace, old cuspidal subspace, new cuspidal subspace as well as the number of new Eisenstein $\QQ$-Galois orbits and the number of newform $\QQ$-Galois orbits.
\item {\bf Bases of modular form spaces over $\QQ$}: Here we store the basis in terms of modular symbols for every space in the previous table. This in Magma readable format.
\item {\bf Eigenforms over $\QQ$}: For every space over $\QQ$, we store an entry for every Eisenstein and newform $\QQ$-Galois orbit uniquely determined my its level, weight and orbit number.
\item {\bf Hecke matrices over $\ZZ$}: For each of the newform orbits in the previous table we store a list (up to a bound that can be increased as needed) of all the Hecke matrices acting on the $\QQ$-subspace spanned by this orbit.
\item {\bf Lattices}: For each of the newform orbits in the $\QQ$-eigenforms table, we store a list of base change matrices that ensure the matrices in the table above, after base change, are with respect to the same basis.
\item {\bf $\ell$-adic idempotents}: Given a newform from the $\QQ$-eigenforms list and a prime number $\ell$, we store a list of idempotents which provide the decomposition of the corresponding $\ell$-adic Hecke algebra into local factors (see Proposition \ref{prop:commalg}), their number and the $\ell$-adic precision that they were computed in.
\item {\bf $\FF_\ell$-Galois orbits}: For each entry in the table above (i.e. a list of idempotents), we store an $\ZZ$-integral basis for each of the components (indexed by the idempotents in this list) that the parent $\QQ$-Galois orbit of newforms breaks into.
\item {\bf $\QQ_\ell$-Galois orbits of newforms}: For each $\QQ$-Galois orbit of newforms and the prime $\ell$, we store the $\QQ_\ell$-Galois orbit of newforms it decomposes into, along with the $\ell$-adic precision they were computed in.
\end{enumerate}

These are the tables that provide a hierarchical organisation of the objects involved in the database and we tried to present it in a top to bottom fashion were an entry in one of these table will be associated with many entries in the ones mentioned after it. 

There are some auxiliary tables where all the congruence information is stored. We store everything down to congruences of individual pairs of coefficients. These are detailed catalogs of all meaningful congruences when it comes to level or weight lowering, weak or strong.

It is obvious that the comparison of two eigenvalues at a prime $p$ is independent from the comparison of the ones at some other prime~$q$.
We thus run a multi-threaded application utilising as many CPU cores as possible where all threads
compare a specific pair of eigenvalues each simultaneously. 
Let us stress here that the design of the database and the multi-threaded application is such that it allows us to utilise more than one
server and/or personal computers to compute even more congruences simultaneously.
Extra care has been taken to avoid overlapping of threads,
i.e.\ two of those computing the same congruence, but we choose not to elaborate on these technical matters.

The current size of the database is 488GB. It contains 3906 $\QQ$-eigenforms, of level and weight up to 361 and 298 respectively (not of all possible combinations of course).

\subsubsection*{Accessibility}

We have designed a basic web interface\footnote{\url{http://math.uni.lu/~tsaknias/elladicdatabase_2.php}}
for the database which currently allows one to query the database about the following:
\begin{enumerate}
\item Given a $G_\QQ$-orbit $[f]$ and a prime $\ell$, return  $G_{\QQ_\ell}$-orbits appearing in it.
\item Given a $G_\QQ$ orbit $[f]$, a prime $\ell$ and a positive integer $n$, return the $G_{\QQ_\ell}$-orbits
that are congruent to the ones corresponding to $[f]$ and $\ell$ modulo $\ell^n$ and are of the smallest weight possible,
i.e.\ the answer to the strong weight lowering modulo $\ell^n$ problem for~$[f]$.
\item Given a $G_\QQ$-orbit $[f]$ and a prime $\ell$, return a list of downloadable files (one for each $G_{\QQ_\ell}$-orbit)
containing all the $\ell$-adic, prime-indexed Hecke polynomials (that are stored in the database) for each $G_{\QQ_\ell}$-orbit.  
\end{enumerate}

\subsubsection*{Some remarks on the algorithms used}

We now describe how we computed the various orbits.
Our algorithm is implemented in the {\sc Magma} computer algebra system \cite{Magma}.
Assume as input a given level $N$, weight $k$ and prime~$\ell$.

\begin{enumerate}
\item Compute the newsubspace of the cuspidal subspace of the modular symbols of level $N$ and weight $k$. Decompose this subspace into irreducible Hecke modules. These correspond to $G_\QQ$-orbits. This is done with standard {\sc Magma} commands.

\item For a given irreducible Hecke module of the previous decomposition, compute the matrices for all operators $T_n$ acting on it up to a sufficient bound~$B$.

\item Use the package {\sc pAdicAlgebras} \cite{pAdicAlgebras} to factor the completion of the Hecke algebra at~$\ell$
into local factors over~$\ZZ_\ell$. Each of these factors corresponds to a $G_{\FF_\ell}$-orbit.
Project the matrices representing the $T_n$'s onto each of these local factors.

\item After tensoring with $\QQ_\ell$, each of these $G_{\FF_\ell}$-orbits is the sum of all the $G_{\QQ_\ell}$-orbits admitting the same reduction mod $\ell$.
For each such orbit, take the collection of projections of the Hecke matrices onto it computed in the previous step and decompose
the corresponding $\QQ_\ell$-vector space into simultaneous generalised eigenspaces by applying each operator successively.
The resulting decomposition is the breaking of the corresponding $G_{\FF_\ell}$-orbit into the $G_{\QQ_\ell}$-ones that coincide mod $\ell$.
\end{enumerate}

\bibliography{References}
\bibliographystyle{alpha}

\end{document}